\numberwithin{equation}{section}
\theoremstyle{definition}
\DeclareMathOperator{\Sp}{Sp}
\DeclareMathOperator{\Mp}{Mp}
\begin{document}
\newcommand{\cb}[1]{\left\{#1\right\}}
\newcommand{\lb}[1]{\left(#1\right)}
\newcommand{\ls}[1]{\left[#1\right]}
\newcommand{\lr}[1]{\left\langle#1\right\rangle}
\newcommand{\lat}[2]{\left.#1\right|_{#2}}
\newcommand{\dd}[2]{\frac{d{#1}}{d{#2}}}
\newcommand{\pp}[2]{\frac{\partial{#1}}{\partial{#2}}}
\newcommand{\tp}{\ensuremath{\widetilde{\phi}}}
\newcommand{\hp}{\ensuremath{\widehat{\phi}}}
\newcommand{\tr}{\ensuremath{\widetilde{\rho}}}
\newcommand{\hr}{\ensuremath{\widehat{\rho}}}
\newcommand{\hxi}{\ensuremath{\widehat{\xi}}}
\newcommand{\hx}{\ensuremath{\widehat{x}}}
\newcommand{\hxs}{\ensuremath{\widehat{x}^*}}
\newcommand{\txi}{\ensuremath{\widetilde{\xi}}}
\newcommand{\hy}{\ensuremath{\widehat{y}}}
\newcommand{\hys}{\ensuremath{\widehat{y}^*}}
\newcommand{\hz}{\ensuremath{\widehat{z}}}
\newcommand{\hzs}{\ensuremath{\widehat{z}^*}}
\newcommand{\sC}{\ensuremath{\mathcal{C}}}
\newcommand{\C}{\ensuremath{\mathbb{C}}}
\newcommand{\N}{\ensuremath{\mathbb{N}}}
\newcommand{\Q}{\ensuremath{\mathbb{Q}}}
\newcommand{\R}{\ensuremath{\mathbb{R}}}
\newcommand{\Z}{\ensuremath{\mathbb{Z}}}
\newcommand{\contr}{\ensuremath{\lrcorner\,}}
\newcommand{\Det}{\ensuremath{\mbox{Det}}}
\newcommand{\maps}[1]{\ensuremath{\stackrel{#1}{\longrightarrow}}}
\newcommand{\two}[2]{\ensuremath{\lb{\begin{array}{c}{#1}\\{#2}\end{array}}}}
\newcommand{\four}[4]{\ensuremath{\lb{\begin{array}{cc}{#1}&{#2}\\{#3}&{#4}\end{array}}}}

\newtheorem{theorem}{Theorem}[section]
\newtheorem{example}[theorem]{Example}
\newtheorem{definition}[theorem]{Definition}
\newtheorem{lemma}[theorem]{Lemma}
\newtheorem{proposition}[theorem]{Proposition}

\title{Dynamical Invariance of a New Metaplectic-c Quantization Condition}
\author{Jennifer Vaughan\\University of Toronto\\ \texttt{jennifer.vaughan@mail.utoronto.ca}}
\date{\today}

\maketitle

\begin{abstract}
Metaplectic-c quantization was developed by Robinson and Rawnsley as an alternative to the classical Kostant-Souriau quantization procedure with half-form correction.  Given a metaplectic-c quantizable symplectic manifold $(M,\omega)$ and a smooth function $H:M\rightarrow\R$, this paper proposes a condition under which $E$, a regular value of $H$, is a quantized energy level for the system $(M,\omega,H)$.  The condition is evaluated on a bundle over $H^{-1}(E)$.  Under modest assumptions, if $E$ is a quantized energy level of $(M,\omega,H)$, then the symplectic reduction of $(M,\omega)$ at $E$ admits a metaplectic-c prequantization, provided the reduction is a smooth manifold.  Moreover, the quantization condition is dynamically invariant:  if there are two functions $H_1$, $H_2$ on $M$ such that $H_1^{-1}(E_1)=H_2^{-1}(E_2)$ for regular values $E_1$, $E_2$, then $E_1$ is a quantized energy level for $(M,\omega,H_1)$ if and only if $E_2$ is a quantized energy level for $(M,\omega,H_2)$.
\end{abstract}

\section{Introduction}

The Kostant-Souriau quantization recipe with half-form correction applies to a symplectic manifold that admits a prequantization circle bundle and a metaplectic structure.  Robinson and Rawnsley \cite{rr1} gave an alternative to this process in which the circle bundle and metaplectic structure are replaced by a single object called a metaplectic-c prequantization.  Metaplectic-c quantization applies to a broader class of manifolds than Kostant-Souriau quantization.

It is well known from physics that the quantum mechanical versions of systems such as the harmonic oscillator and the hydrogen atom are restricted to discrete energy levels.  More generally, suppose $(M,\omega)$ is a quantizable symplectic manifold and $H:M\rightarrow\R$ is a function on $M$.  If we think of $H$ as a Hamiltonian energy function, then we can ask what it means for a regular value $E$ of $H$ to be a quantized energy level for the system $(M,\omega,H)$.

One possible answer to this question involves the construction of the symplectic reduction.  The orbits of the Hamiltonian vector field $\xi_H$ partition the level set $H^{-1}(E)$.  If $M_E$, the space of orbits, is a manifold, then $\omega$ induces a symplectic form $\omega_E$ on $M_E$, and $(M_E,\omega_E)$ is the symplectic reduction of $(M,\omega)$ at $E$.  The values of $E$ for which this new symplectic manifold is quantizable can be taken to be the quantized energy levels of the system.  This definition has been applied to the hydrogen atom in the context of Kostant-Souriau quantization \cite{sim3}, and to the harmonic oscillator in the context of metaplectic-c quantization \cite{rr1}.  In both cases, the physically predicted energy spectrum is obtained.  However, this definition is limited to cases where the space of orbits is a manifold.  It would be more convenient to have a quantized energy condition that is evaluated over the original manifold, rather than the quotient.

In this paper, we propose a definition for a quantized energy level of $(M,\omega,H)$ in the case where $(M,\omega)$ admits a metaplectic-c prequantization.  Given one additional assumption, which we describe in Section \ref{subsec:mpcred}, if $E$ is a quantized energy level of $(M,\omega,H)$, then the symplectic reduction $(M_E,\omega_E)$ is metaplectic-c quantizable, provided that it is a manifold.  The quantized energy condition is evaluated on a bundle over the level set $H^{-1}(E)$.  

This scenario was first studied by Robinson \cite{rob1}, and our work is strongly motivated by his results.  However, we choose a more flexible condition than the one Robinson considered.  As we will show, if $H_1$ and $H_2$ are two functions on $(M,\omega)$ with regular values $E_1$ and $E_2$, respectively, such that $H_1^{-1}(E_1)=H_2^{-1}(E_2)$, then under our definition, $E_1$ is a quantized energy level of $(M,\omega,H_1)$ if and only if $E_2$ is a quantized energy level of $(M,\omega,H_2)$.  In other words, the quantization condition only depends on geometry of the level set, and not on the dynamics of a particular Hamiltonian.  This is not true of the definition considered in \cite{rob1}.

In Section \ref{sec:mpcgp}, we review the definitions of the metaplectic-c group and metaplectic-c prequantization.  Section \ref{sec:energy} summarizes Robinson's results concerning symplectic reduction, and concludes with our proposed definition for a quantized energy level of a metaplectic-c quantizable system.  The proof of the dynamical invariance of our definition is given in Section \ref{sec:dynam}.  

In Section \ref{sec:sho}, we consider the example of the harmonic oscillator.  This example serves two purposes.  First, we demonstrate a computational technique in which a local change of coordinates on $M$ is lifted to the metaplectic-c prequantization.  Second, we state an example of a function on $M$ that has exactly one level set in common with the harmonic oscillator, and show that the quantization condition on that level set is identical for both functions.  Finally, in Section \ref{sec:ks}, we adapt the quantized energy definition to apply to a Kostant-Souriau quantizable symplectic manifold.  We show that the Kostant-Souriau and metaplectic-c definitions have almost identical properties, but the metaplectic-c version is the only one that yields the correct quantized energy levels for the harmonic oscillator.

\section{The Metaplectic-c Group and Metaplectic-c Prequantization}\label{sec:mpcgp}

The material in this section is a summary of results originally presented by Robinson and Rawnsley \cite{rr1}.  More detail, including proofs of the properties that we state, can be found in \cite{rr1}.

\subsection{Definitions on Vector Spaces}\label{subsec:vtspc}

Let $V$ be an $n$-dimensional complex vector space with Hermitian inner product $\lr{\cdot,\cdot}$.  If we view $V$ as a $2n$-dimensional real vector space, then the action of the scalar $i\in\C$ becomes the real automorphism $J:V\rightarrow V$.  Define the real bilinear form $\Omega$ on $V$ by $$\Omega(v,w)=\mbox{Im}\lr{v,w},\ \ \forall v,w\in V.$$  Then $(V,\Omega)$ is a $2n$-dimensional symplectic vector space.  The Hermitian and symplectic structures are compatible in the sense that $$\lr{v,w}=\Omega(Jv,w)+i\Omega(v,w),\ \ \forall v,w\in V.$$

The symplectic group $\Sp(V)$ is the group of real automorphisms $g:V\rightarrow V$ such that $\Omega(gv,gw)=\Omega(v,w)$ for all $v,w\in V$.  Its connected double cover is the metaplectic group $\Mp(V)$.  Let $\Mp(V)\maps{\sigma}\Sp(V)$ denote the covering map, and note that $\sigma^{-1}(I)$ is a subgroup of $\Mp(V)$ that is isomorphic to $\Z_2$.   This subgroup is used to construct the metaplectic-c group $\Mp^c(V)$:  $$\Mp^c(V)=\Mp(V)\times_{\Z_2}U(1).$$

The \emph{projection map} on $\Mp^c(V)$, which we also denote by $\sigma$, is the group homomorphism such that $$1\rightarrow U(1)\rightarrow\Mp^c(V)\maps{\sigma}\Sp(V)\rightarrow 1$$ is a short exact sequence, and such that the restriction of $\sigma$ to $\Mp(V)\subset\Mp^c(V)$ is exactly the double covering map $\Mp(V)\maps{\sigma}\Sp(V)$.  The \emph{determinant map} $\eta$ is the group homomorphism on $\Mp^c(V)$ such that $$1\rightarrow\Mp(V)\rightarrow\Mp^c(V)\maps{\eta}U(1)\rightarrow 1$$ is a short exact sequence, and such that the restriction of $\eta$ to $U(1)\subset\Mp^c(V)$ is given by $\eta(\lambda)=\lambda^2$ for all $\lambda\in U(1)$.  At the level of Lie algebras, the map $\sigma_*\oplus\frac{1}{2}\eta_*$ yields the identification $$\mathfrak{mp}^c(V)=\mathfrak{sp}(V)\oplus\mathfrak{u}(1).$$

Given $g\in\Sp(V)$, let $$C_g=\frac{1}{2}(g-JgJ).$$  By construction, $C_g$ commutes with $J$, so it is a complex linear transformation of $V$.  The determinant of $C_g$ as a complex transformation is written $\mbox{Det}_\C C_g$.  It can be shown that $C_g$ is always invertible, so $\mbox{Det}_\C C_g$ is a nonzero complex number.

A useful embedding of $\Mp^c(V)$ into $\Sp(V)\times\C\setminus\cb{0}$ can be defined as follows.  An element $a\in\Mp^c(V)$ is mapped to the pair $(g,\mu)\in\Sp(V)\times\C\setminus\cb{0}$, where $g\in\Sp(V)$ satisfies $\sigma(a)=g$, and $\mu\in U(1)$ satisfies $\eta(a)=\mu^2\Det_\C C_g$.  The latter condition defines $\mu$ up to a sign, and we adopt the convention that the parameters of $I\in\Mp^c(V)$ are $(I,1)$.  This choice uniquely determines the sign of $\mu$ for all $a\in\Mp^c(V)$.  Following \cite{rr1}, we refer to $(g,\mu)$ as the \emph{parameters} of $a$.

We note two properties of this parametrization, both of which will be used in Section \ref{sec:sho}.  First, if $a\in\Mp(V)\subset\Mp^c(V)$, then $\eta(a)=1$, and so the parameters of $a$ are $(g,\mu)$ where $\sigma(a)=g$ and $\mu^2\Det_\C C_g=1$.  Second, if $\lambda\in U(1)\subset\Mp^c(V)$, then the parameters of $\lambda$ are $(I,\lambda)$, and for any other element $a\in\Mp^c(V)$ with parameters $(g,\mu)$, the parameters of the product $a\lambda$ are $(g,\mu\lambda)$.

\subsection{Vector Subspaces and Quotients}\label{subsec:vtsubspc}

Let $W\subset V$ be a real subspace of codimension 1.  Define the symplectic orthogonal of $W$ to be $$W^\perp=\cb{v\in V:\Omega(v,W)=0}.$$  Then $W^\perp$ is a one-dimensional subspace of $W$.  The form $\Omega$ induces a symplectic form $\Omega_W$ on the quotient space $W/W^\perp$.  We write an element of $W/W^\perp$ as an equivalence class $[w]$ for some $w\in W$.  

Let $\Sp(V;W)$ be the subgroup of $\Sp(V)$ consisting of those symplectic automorphisms that preserve $W$:  $$\Sp(V;W)=\cb{g\in\Sp(V):gW=W}.$$  Then elements of $\Sp(V;W)$ also preserve $W^\perp$, and there is a group homomorphism $$\Sp(V;W)\maps{\nu}\Sp(W/W^\perp),$$ defined by $$(\nu g)[w]=[gw],\ \ \forall g\in\Sp(V;W), \forall w\in W.$$

Let $\Mp^c(V;W)\subset\Mp^c(V)$ be the preimage of $\Sp(V;W)$ under the projection map $\sigma$.  Robinson and Rawnsley \cite{rr1} constructed a group homomorphism $\Mp^c(V;W)\maps{\hat{\nu}}\Mp^c(W/W^\perp)$ such that the diagram below commutes.
\begin{center}
$\xymatrix{
\Mp^c(V)\supset\Mp^c(V;W) \ar[r]^(0.6){\hat{\nu}} \ar[d]^\sigma & \Mp^c(W/W^\perp) \ar[d]^\sigma \\
\Sp(V)\supset\Sp(V;W) \ar[r]^(0.6){\nu} & \Sp(W/W^\perp)
}$
\end{center}
The map $\hat{\nu}$ has the following property.  Given $a\in\Mp^c(V;W)$, let $\chi(a)=\mbox{Det}_\R(\sigma(a)|W^\perp)$.  Then $\chi$ is a real-valued character on $\Mp^c(V;W)$, and it satisfies $$\eta\circ\hat{\nu}(a)=\eta(a)\mbox{sign}\,\chi(a),\ \ \forall a\in\Mp^c(V;W).$$   Since $\mbox{sign}\,\chi(a)$ is identically $1$ on a neighborhood of $I$, we see that $\eta_*\circ\hat{\nu}_*=\eta_*$ as maps between Lie algebras.

\subsection{Definitions Over a Symplectic Manifold}\label{subsec:mpcquant}

Let $(M,\omega)$ be a $2n$-dimensional symplectic manifold, and assume that a $2n$-dimensional model symplectic vector space $(V,\Omega)$ with complex structure $J$ has been fixed.  The groups $\Sp(V)$ and $\Mp^c(V)$ are the structure groups for the bundles that we now define.  

\begin{definition}
The \textbf{symplectic frame bundle} $\Sp(M,\omega)\maps{\rho}M$ is a right principal $\Sp(V)$ bundle over $M$ given by $$\Sp(M,\omega)_m=\cb{b:V\rightarrow T_mM:b\mbox{ is a symplectic linear isomorphism}},\ \ \forall m\in M.$$  The group $\Sp(V)$ acts on the fibers by precomposition.
\end{definition}

\begin{definition}
A \textbf{metaplectic-c structure} for $(M,\omega)$ consists of a right principal $\Mp^c(V)$ bundle $P\maps{\Pi}M$ and a map $P\maps{\Sigma}\Sp(M,\omega)$ such that $$\Sigma(q\cdot a)=\Sigma(q)\cdot\sigma(a),\ \ \forall q\in P,\ \forall a\in\Mp^c(V),$$ and such that the following diagram commutes.
$$\xymatrix{
P \ar[r]^(0.4){\Sigma} \ar[d]^\Pi & \Sp(M,\omega) \ar[dl]^\rho \\
(M,\omega)
}$$
\end{definition}

\begin{definition}
A \textbf{metaplectic-c prequantization} of $(M,\omega)$ is a metaplectic-c structure $(P,\Sigma)$ for $(M,\omega)$, together with a $\mathfrak{u}(1)$-valued one-form $\gamma$ on $P$ such that:
\begin{enumerate}[(1)]
\item $\gamma$ is invariant under the principal $\Mp^c(V)$ action;

\item for any $\alpha\in\mathfrak{mp}^c(V)$, if $\partial_\alpha$ is the vector field on $P$ generated by the infinitesimal action of $\alpha$, then $\gamma(\partial_\alpha)=\frac{1}{2}\eta_*\alpha$;

\item $d\gamma=\frac{1}{i\hbar}\Pi^*\omega$.
\end{enumerate}
\end{definition}

A similar structure, called a spin-c prequantization, was studied in \cite{ckt,f1,ggk,gk}.

\section{Quantized Energy Levels}\label{sec:energy}

Given a symplectic manifold $(M,\omega)$ and a smooth function $H:M\rightarrow\R$, the symplectic reduction of $(M,\omega)$ at the regular value $E$ of $H$ is constructed by taking the quotient of the level set $H^{-1}(E)$ by the orbits of the Hamiltonian vector field $\xi_H$.  Suppose $(M,\omega)$ admits a metaplectic-c prequantization $(P,\Sigma,\gamma)$.  There is a natural lift of $\xi_H$ to a vector field $\txi_{H}$ on $\Sp(M,\omega)$; $\txi_H$ can then be lifted to a vector field $\hxi_H$ on $P$ that is horizontal with respect to $\gamma$.  Robinson \cite{rob1} constructed a certain associated bundle $P_S\rightarrow H^{-1}(E)$, on which $\hxi_H$ induces a vector field, and he examined the conditions under which the quotient of $P_S$ by the induced orbits of $\hxi_H$ yields a metaplectic-c prequantization for the symplectic reduction of $(M,\omega)$ at $E$.

In this section, we review the key results from \cite{rob1}, then state our alternative definition for a quantized energy level of the system $(M,\omega,H)$.  Our definition is evaluated on the bundle $P_S$, and ensures that the symplectic reduction acquires a metaplectic-c prequantization under the same conditions as Robinson's definition.  Further, as we will show in Section 4, our definition depends only on the geometry of the level set $H^{-1}(E)$, and not on the specific choice of $H$.  This is an improvement over the definition from \cite{rob1}.

\subsection{Lifting the Hamiltonian Vector Field}\label{subsec:hamvt}

Assume that the model vector space $V$ has been chosen as in Section \ref{subsec:vtspc}.  Let $(M,\omega)$ be a symplectic manifold that admits a metaplectic-c prequantization $(P,\Sigma,\gamma)$.  The definitions from Section \ref{subsec:mpcquant} yield the three-level structure $$(P,\gamma)\maps{\Sigma}\Sp(M,\omega)\maps{\rho}(M,\omega),\ \ \rho\circ\Sigma=\Pi.$$  

Let $H:M\rightarrow\R$ be a smooth function on $M$.  We define the Hamiltonian vector field $\xi_H$ using the convention that $$\xi_H\contr\omega=dH.$$  Let the flow of $\xi_H$ be $\phi^t$, and note that $\phi^t$ is a symplectomorphism from its domain to its range for each $t$.  

The following procedure for lifting $\xi_H$ and $\phi^t$ was laid out in \cite{rr1}.  Since $\phi^t$ is a symplectomorphism, the pushforward $\phi^t_*:T_mM\rightarrow T_{\phi^tm}M$ is a symplectic isomorphism for each $m\in M$.  Let $\tp^t:\Sp(M,\omega)\rightarrow\Sp(M,\omega)$ be defined by $$\tp^t(b)=\phi^t_*\circ b,\ \ \forall b\in\Sp(M,\omega).$$  Then $\tp^t$ is a flow on $\Sp(M,\omega)$.  Let $\txi_H$ be the vector field on $\Sp(M,\omega)$ whose flow is $\tp^t$.  Define
the vector field $\hxi_H$ on $P$ to be the lift of $\txi_H$ that is horizontal with respect to the one-form $\gamma$, and let the flow of $\hxi_H$ be $\hp^t$.

\subsection{Quotients and Reduction}\label{subsec:mpcred}

We will now restrict our attention to a particular level set.  Let $E\in\R$ be a regular value of $H$, and let $S=H^{-1}(E)$.  The null foliation $TS^\perp$ is defined fiberwise over $S$ by $$T_sS^\perp=\cb{\zeta\in T_sM:\omega(\zeta,T_sS)=0},\ \ \forall s\in S.$$  Then $T_sS^\perp$ is a one-dimensional subspace of the tangent space $T_sS$, and $T_sS^\perp=\mbox{span}\cb{\xi_H(s)}$.  

The constructions that follow are taken from \cite{rob1}.  As in Section \ref{subsec:vtsubspc}, fix a subspace $W\subset V$ of codimension $1$.  By definition, an element $b\in\Sp(M,\omega)_m$ is a symplectic linear isomorphism $b:V\rightarrow T_mM$.  Let $\Sp(M,\omega;S)$ be the subset of $\Sp(M,\omega)|_{S}$ given by $$\Sp(M,\omega;S)_s=\cb{b\in\Sp(M,\omega)_s:bW=T_sS},\ \ \forall s\in S.$$  Then $\Sp(M,\omega;S)$ is a principal $\Sp(V;W)$ bundle over $S$.  Note that any $b\in\Sp(M,\omega;S)_s$ maps $W^\perp$ to $T_sS^\perp$.

Viewing $P$ as a circle bundle over $\Sp(M,\omega)$, let $P^S$ be the result of restricting $P$ to $\Sp(M,\omega;S)$.  Then $P^S$ is a principal $\Mp^c(V;W)$ bundle over $S$.   Let $\gamma^S$ be the pullback of $\gamma$ to $P^S$.  After all of these restrictions, we have constructed the new three-level structure $$(P^S,\gamma^S)\rightarrow\Sp(M,\omega;S)\rightarrow S.$$

For each $s\in S$, $T_sS/T_sS^\perp$ is a symplectic vector space with symplectic structure induced by $\omega_s$.  Let $\Sp(TS/TS^\perp)$ be the symplectic frame bundle for $TS/TS^\perp$, modeled on $(W/W^\perp,\Omega_W)$:  $$\Sp(TS/TS^\perp)_s=\cb{b':W/W^\perp\rightarrow T_sS/T_sS^\perp:b'\ \mbox{is a symplectic linear isomorphism}},\ \ \forall s\in S.$$  Recall the group homomorphism $\nu:\Sp(V;W)\rightarrow\Sp(W/W^\perp)$ from Section \ref{subsec:vtsubspc}.  The bundle associated to $\Sp(M,\omega;S)$ by the map $\nu$ can be naturally identified with $\Sp(TS/TS^\perp)$.

Next, let $P_S\rightarrow S$ be the bundle associated to $P^S\rightarrow S$ by the group homomorphism $\hat{\nu}:\Mp^c(V;W)\rightarrow\Mp^c(W/W^\perp)$.  Then $P_S$ is a principal $\Mp^c(W/W^\perp)$ bundle over $S$, and a circle bundle over $\Sp(TS/TS^\perp)$.  The one-form $\gamma^S$ induces a connection one-form $\gamma_S$ on $P_S$.  This completes the construction of another three-level structure, $$(P_S,\gamma_S)\rightarrow\Sp(TS/TS^\perp)\rightarrow S.$$

Now let us consider the actions of $\phi^t$, $\tp^t$ and $\hp^t$ on all of these bundles.  First, it is clear that $\phi^t$ preserves $S$, since $\xi_H(s)\in T_sS$ at each $s\in S$.  Therefore $\tp^t$ restricts to a flow on $\Sp(M,\omega;S)$, and so $\hp^t$ restricts to a flow on $P^S$.  One can verify that $\tp^t$ and $\hp^t$ are equivariant with respect to the principal $\Sp(V)$ and $\Mp^c(V)$ actions, respectively, which implies that $\tp^t$ induces a flow on the associated bundle $\Sp(TS/TS^\perp)$, and $\hp^t$ induces a flow on $P_S$.  We let $\tp^t$ and $\hp^t$ also denote the respective flows induced on $\Sp(TS/TS^\perp)$ and $P_S$.

Suppose the null foliation is fibrating, so that the quotient of $S$ by the orbits of $\xi_H$ is a manifold.  Let the quotient manifold be $M_E$, and let $\omega_E$ be the symplectic form on $M_E$ induced by $\omega$.  As discussed in \cite{rob1}, if the quotient of $\Sp(TS/TS^\perp)$ by $\tp^t$ is well defined, then it is naturally isomorphic to the symplectic frame bundle $\Sp(M_E,\omega_E)$.  To ensure that this quotient is well defined, it is sufficient to require that if $s\in S$ is fixed by $\phi^t$, then $\phi^t_*$ is the identity on $T_sS$.  The culmination of all of these steps is shown below.

\begin{center}
$\xymatrix{
(P,\gamma) \ar[d]^{\Sigma} & \ar[l]_{\mbox{\footnotesize incl.}} (P^S,\gamma^S) \ar[d] \ar[r]^{\hat{\nu}} & (P_S,\gamma_S) \ar[d] & \\
\Sp(M,\omega) \ar[d]^{\rho} & \ar[l]_{\mbox{\footnotesize incl.}} \Sp(M,\omega;S) \ar[d] \ar[r]^{\nu} & \Sp(TS/TS^\perp) \ar[d] \ar[r]^{/\tp^t} & \Sp(M_E,\omega_E) \ar[d] \\
(M,\omega) & \ar[l]_{\mbox{\footnotesize incl.}} S \ar[r]^{=} & S \ar[r]^{/\phi^t} & (M_E,\omega_E)
}$
\end{center}

The obvious way to complete the picture is to factor $(P_S,\gamma_S)$ by $\hp^t$.  Robinson \cite{rob1} addressed the question of when this yields a metaplectic-c prequantization for $(M_E,\omega_E)$ with the following theorem.

\begin{theorem}\label{thm:rob}
(Robinson, 1990) 
Assume that the symplectic reduction $(M_E,\omega_E)$ is a manifold, and that its symplectic frame bundle $\Sp(M_E,\omega_E)$ can be identified with the quotient of $\Sp(TS/TS^\perp)$ by the induced flow $\tp^t$.  If $\gamma_S$ has trivial holonomy over all of the closed orbits of $\tp^t$ on $\Sp(TS/TS^\perp)$, then the quotient of $(P_S,\gamma_S)$ by $\hp^t$ is a metaplectic-c prequantization for $(M_E,\omega_E)$.
\end{theorem}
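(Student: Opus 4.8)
The plan is to set $P_{M_E}:=P_S/\hp^t$ and verify, in order, that: (a) $\gamma_S$ is basic for the flow $\hp^t$, so that it descends to a $\mathfrak{u}(1)$-valued one-form $\gamma_{M_E}$ on $P_{M_E}$ once that quotient is known to be smooth; (b) the holonomy hypothesis makes $P_{M_E}$ a principal $\Mp^c(W/W^\perp)$ bundle over $M_E$ and a circle bundle over $\Sp(M_E,\omega_E)=\Sp(TS/TS^\perp)/\tp^t$, so that the induced bundle map $\Sigma_{M_E}$ turns $P_{M_E}$ into a metaplectic-c structure for $(M_E,\omega_E)$; and (c) the triple $(P_{M_E},\Sigma_{M_E},\gamma_{M_E})$ satisfies the three defining conditions of a metaplectic-c prequantization. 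Steps (a) and (c) are essentially transport of structure through the restriction, associated-bundle, and quotient operations that produced $(P_S,\gamma_S)$ in Section \ref{subsec:mpcred}; I expect the genuine work to be concentrated in (b).

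For (a): the generator $Z$ of $\hp^t$ on $P_S$ is $\gamma_S$-horizontal --- it is the horizontal lift of the generator of $\tp^t$, and horizontality survives the restriction to $P^S$ and the passage to the associated bundle $P_S$ --- so $\gamma_S(Z)=0$ and Cartan's formula gives $\mathcal{L}_Z\gamma_S=Z\contr d\gamma_S$. Writing $\bar\pi\colon P_S\to S$ and $\iota_S\colon S\hookrightarrow M$, the construction of $\gamma_S$ in \cite{rob1} gives $d\gamma_S=\frac{1}{i\hbar}\bar\pi^*(\iota_S^*\omega)$; since $\bar\pi_*Z=\xi_H|_S$, this yields $\mathcal{L}_Z\gamma_S=\frac{1}{i\hbar}\bar\pi^*\iota_S^*(\xi_H\contr\omega)=\frac{1}{i\hbar}\bar\pi^*\iota_S^*\,dH=0$, because $H$ is constant on $S$. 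Hence $\gamma_S$ is invariant and horizontal with respect to $\hp^t$, i.e.\ basic, and descends to $\gamma_{M_E}$ on $P_{M_E}$.

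Step (b) is the main obstacle. Over the open subset of $S$ swept out by non-closed $\xi_H$-orbits, each of the three levels is locally a product with an interval, and the quotient by $\hp^t$ is immediate. Over a closed orbit $\mathcal{O}$ of $\phi^t$ of minimal period $T$, the standing assumption that $\Sp(TS/TS^\perp)/\tp^t\cong\Sp(M_E,\omega_E)$ --- for which \cite{rob1} gives the sufficient condition that $\phi^T_*$ be the identity on $T_sS$ for $s\in\mathcal{O}$ --- forces $\tp^T$ to be the identity on the fibre of $\Sp(TS/TS^\perp)$ over $\mathcal{O}$, so the $\tp^t$-orbits lying over $\mathcal{O}$ are circles of period $T$. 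Since $\hp^t$ is the $\gamma_S$-horizontal lift of $\tp^t$, after time $T$ it returns to its starting fibre in $P_S$ displaced exactly by the holonomy of $\gamma_S$ around the corresponding closed $\tp^t$-orbit; the hypothesis that this holonomy is trivial forces $\hp^T$ to be the identity on that fibre, so the $\hp^t$-orbits over $\mathcal{O}$ are again circles of period $T$. A slice argument --- flow a local section of $P_S\to\Sp(TS/TS^\perp)\to S$ transverse to $\mathcal{O}$ --- then realizes $P_{M_E}$ near the image of $\mathcal{O}$ as a smooth manifold with $P_{M_E}\to M_E$ locally trivial, the compatibility of these charts as the periods of nearby closed orbits vary being already guaranteed by the assumption that the null foliation of $S$ fibres. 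Because $\hp^t$ commutes with the principal $\Mp^c(W/W^\perp)$ action and with the projections to $\Sp(TS/TS^\perp)$ and to $S$, that action, the bundle map $\Sigma_{M_E}$ to $\Sp(M_E,\omega_E)$, and the relation $\Sigma_{M_E}(q\cdot a)=\Sigma_{M_E}(q)\cdot\sigma(a)$ all descend, so $(P_{M_E},\Sigma_{M_E})$ is a metaplectic-c structure for $(M_E,\omega_E)$.

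For (c): condition (1), invariance of $\gamma_{M_E}$ under $\Mp^c(W/W^\perp)$, descends from the corresponding invariance of $\gamma_S$. Condition (2) holds because $\gamma_S(\partial_\alpha)=\frac{1}{2}\eta_*\alpha$ already holds on $P_S$ for $\alpha\in\mathfrak{mp}^c(W/W^\perp)$ --- this is inherited from $\gamma^S(\partial_\beta)=\frac{1}{2}\eta_*\beta$ on $P^S$ via the identity $\eta_*\circ\hat{\nu}_*=\eta_*$ recorded in Section \ref{subsec:vtsubspc} --- and the fundamental vector fields $\partial_\alpha$ on $P_S$ are $\hp^t$-related to those on $P_{M_E}$. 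Condition (3): combining $d\gamma_S=\frac{1}{i\hbar}\bar\pi^*(\iota_S^*\omega)$ with the defining property of the reduced form, $\iota_S^*\omega=\pi_E^*\omega_E$ where $\pi_E\colon S\to M_E$ is the quotient projection, gives $d\gamma_S=\frac{1}{i\hbar}\widehat\pi^*\omega_E$ with $\widehat\pi=\pi_E\circ\bar\pi\colon P_S\to M_E$; both sides are basic for $\hp^t$, so the equation descends to $d\gamma_{M_E}=\frac{1}{i\hbar}\Pi_{M_E}^*\omega_E$. The only delicate point in the whole argument is the smoothness and local triviality of the $\hp^t$-quotient in step (b), where the trivial-holonomy hypothesis is exactly what prevents an $\hp^t$-orbit over a closed $\phi^t$-orbit from being a non-closed --- helical or densely winding --- curve that would obstruct the quotient.
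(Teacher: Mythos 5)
The paper itself contains no proof of this statement: Theorem \ref{thm:rob} is quoted from Robinson \cite{rob1}, and the only indication of the original argument is the general factorization lemma recalled in Section \ref{sec:ks} --- for a principal circle bundle with connection $(Z,\delta)$ over a manifold carrying a foliation with smooth leaf space, trivial holonomy of $\delta$ over the leaves allows $Z$ and $\delta$ to descend to the leaf space, with the curvature pulling back correctly. Robinson applies this with $Z=P_S$ viewed as a circle bundle over $\Sp(TS/TS^\perp)$, the foliation given by the $\tp^t$-orbits, and leaf space $\Sp(M_E,\omega_E)$ (the standing identification); the passage from ``circle bundle with connection over $\Sp(M_E,\omega_E)$'' to ``metaplectic-c prequantization of $(M_E,\omega_E)$'' is then bookkeeping of the kind you do in steps (a) and (c). So your proposal follows the same underlying route, except that you reconstruct the factorization lemma by hand in this special case rather than invoking it; what the lemma buys is precisely a clean treatment of the one point you flag as delicate, the smoothness and local triviality of the quotient $P_S/\hp^t$, while your direct argument has the merit of showing exactly where the trivial-holonomy hypothesis enters (closing of the horizontal lifts over closed $\tp^t$-orbits).

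On the details: steps (a) and (c) are sound --- the computation $\mathcal{L}_Z\gamma_S=Z\contr d\gamma_S=\frac{1}{i\hbar}\bar\pi^*\iota_S^*dH=0$ is the right reason $\gamma_S$ is basic, and condition (2) does pass through the identity $\eta_*\circ\hat{\nu}_*=\eta_*$. In (b), your claim that $\tp^T$ is the identity on the fibres of $\Sp(TS/TS^\perp)$ over a closed orbit $\mathcal{O}$ of minimal period $T$ is correct, but it deserves a cleaner justification than ``forced by the identification'': since $\pi_E\circ\phi^T=\pi_E$ on $S$ and $\pi_{E*}$ descends to an isomorphism $T_sS/T_sS^\perp\rightarrow T_{[s]}M_E$, differentiating shows that $\phi^T_*$ induces the identity on $T_sS/T_sS^\perp$, hence $\tp^T=\mathrm{id}$ on the fibre over $s$; with trivial holonomy the $\hp^t$-orbits over $\mathcal{O}$ then close with period $T$ as you say. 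Also, your case split is slightly off as stated --- the set of points on non-closed orbits need not be open --- but under the fibrating hypothesis the foliation is locally trivial near every orbit, closed or not, so the split is unnecessary. The slice argument for local triviality of $P_{M_E}\rightarrow M_E$ is the one place a referee would ask for more, and it is exactly the content of the general lemma the paper attributes to \cite{rob1}.
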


To satisfy the holonomy condition in Theorem \ref{thm:rob}, it is sufficient -- though not necessary -- to show that $\gamma^S$ has trivial holonomy over all closed orbits of $\tp^t$ on $\Sp(M,\omega;S)$.  This is the quantization condition that was explored in the remainder of \cite{rob1}.  However, we argue that a more robust condition arises when we evaluate the holonomy over orbits in $\Sp(TS/TS^\perp)$.  As such, we propose the following definition.

\begin{definition}\label{def:quantE}
When $\gamma_S$ has trivial holonomy over all closed orbits of $\tp^t$ on $\Sp(TS/TS^\perp)$, we say that $E$ is a \textbf{quantized energy level} of the system $(M,\omega,H)$.
\end{definition}

In those cases where $E$ is a quantized energy level and the symplectic reduction at $E$ exists, Theorem \ref{thm:rob} gives a sufficient condition for the symplectic reduction to admit a metaplectic-c prequantization.  However, the given definition of a quantized energy level can also be applied to systems where the symplectic reduction does not exist.

The next section contains the proof of the dynamical invariance property.  We will show that if $S$ is a regular level set of two functions $H_1$ and $H_2$ on $M$, then $S$ corresponds to a quantized energy level for $(M,\omega,H_1)$ if and only if it does so for $(M,\omega,H_2)$.

\section{Dynamical Invariance}\label{sec:dynam}

We continue to use all of the definitions established in Section \ref{sec:energy}.  To prove that the quantized energy condition is dynamically invariant, we proceed in two stages.  In Section \ref{subsec:stage1}, we prove Theorem \ref{thm:stage1}, which states that if $E$ is a quantized energy level for the system $(M,\omega,H)$, then for any diffeomorphism $f:\R\rightarrow\R$, the value $f(E)$ is a quantized energy level for the system $(M,\omega,f\circ H)$.  While this is a less general statement, it allows us to establish some useful preliminary observations.  Then, in Section \ref{subsec:stage2}, we prove Theorem \ref{thm:stage2}, which states that if the functions $H_1,H_2:M\rightarrow\R$ have regular values $E_1$ and $E_2$, respectively, such that $H_1^{-1}(E_1)=H_2^{-1}(E_2)$, then $E_1$ is a quantized energy level for $(M,\omega,H_1)$ if and only if $E_2$ is a quantized energy level for $(M,\omega,H_2)$.

\subsection{Invariance Under a Diffeomorphism}\label{subsec:stage1}

Let $H:M\rightarrow\R$ be given, and fix a regular value $E$ of $H$.  Let $f:\R\rightarrow\R$ be a diffeomorphism, and let $S=H^{-1}(E)=(f\circ H)^{-1}(f(E))$.

Denote the Hamiltonian vector field for $H$ by $\xi_H$, and let it have flow $\phi^t$ on $M$.  Then $\xi_H$ lifts to $\txi_H$ on $\Sp(M,\omega)$ and to $\hxi_H$ on $P$, with flows $\tp^t$ and $\hp^t$, respectively, as in Section \ref{subsec:hamvt}.  Similarly, denote the Hamiltonian vector field for $f\circ H$ by $\xi_{fH}$, and let it have flow $\rho^t$ on $M$.  By the same process, we obtain the lifts $\txi_{fH}$ on $\Sp(M,\omega)$ and $\hxi_{fH}$ on $P$, with flows $\tr^t$ and $\hr^t$, respectively.

Observe that for any $m\in M$, $$d(f\circ H)|_{m}=\left.\dd{f(H)}{H}\right|_{H(m)}dH|_m,$$ which implies that $$\xi_{fH}(m)=\left.\dd{f(H)}{H}\right|_{H(m)}\xi_H(m).$$  Since $S$ is a level set of $H$, $\displaystyle\left.\dd{f(H)}{H}\right|_{H(m)}$ is constant over $S$.  Let $c\in\R$ be that constant value on $S$.  Then $\xi_{fH}=c\xi_H$ everywhere on $S$, which implies that $\rho^t=\phi^{ct}$ everywhere on $S$.  

Recall that $\Sp(TS/TS^\perp)$ can be naturally identified with the bundle associated to $\Sp(M,\omega;S)$ by the group homomorphism $\nu:\Sp(V;W)\rightarrow\Sp(W/W^\perp)$.  We write an element of $W/W^\perp$ as the equivalence class $[w]$ for some $w\in W$, and we write an element of $T_sS/T_sS^\perp$ as the equivalence class $[\zeta]$ for some $\zeta\in T_sS$.  As discussed in Section \ref{subsec:mpcred}, the lifted flow $\tp^t$ on $\Sp(M,\omega)$ maps $\Sp(M,\omega;S)$ to itself and induces a flow $\tp^t$ on $\Sp(TS/TS^\perp)$.  More explicitly, for any $b'\in\Sp(TS/TS^\perp)$, we define $\tp^t(b')$ by choosing $b\in\Sp(M,\omega;S)$ such that $[bw]=b'[w]$ for all $w\in W$, and setting $$\tp^t(b')[w]=[\tp^t(b)w],\ \ \forall w\in W.$$  These remarks apply equally well to $\tr^t$.

\begin{lemma}\label{lem:cvalue1}
$\tr^t=\tp^{ct}$ on $\Sp(TS/TS^\perp)$.
\end{lemma}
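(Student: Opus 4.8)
The plan is to deduce the lemma from the identity $\rho^t = \phi^{ct}$ on $S$, established in the paragraph preceding the statement, by observing that the flows induced on $\Sp(TS/TS^\perp)$ only see the behaviour of $\rho^t$ and $\phi^{ct}$ along $S$. The point to bear in mind from the outset is that one cannot hope to prove the stronger assertion $\tr^t = \tp^{ct}$ on $\Sp(M,\omega;S)$: since $\rho^t$ and $\phi^{ct}$ agree only on $S$ and not on a neighbourhood of $S$ in $M$, the pushforwards $\rho^t_*$ and $\phi^{ct}_*$ can differ on tangent vectors transverse to $S$, so $\rho^t_*\circ b$ and $\phi^{ct}_*\circ b$ are generally distinct frames. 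Forming the quotient by $TS^\perp$ is precisely what discards the directions in which they disagree.

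First I would record the differential statement. Since $\rho^t$ and $\phi^{ct}$ coincide as maps from (an open subset of) $S$ to $S$, differentiating along $S$ gives $\rho^t_*\zeta = \phi^{ct}_*\zeta$ for every $s\in S$ and every $\zeta\in T_sS$, with both sides lying in $T_{\rho^t s}S = T_{\phi^{ct}s}S$. Together with the fact that $\rho^t_*$ and $\phi^{ct}_*$ each carry $T_sS^\perp$ into $T_{\rho^t s}S^\perp$ (being pushforwards of symplectomorphisms that preserve $S$, hence its null foliation), this shows that $\rho^t_*$ and $\phi^{ct}_*$ induce one and the same isomorphism $T_sS/T_sS^\perp \rightarrow T_{\rho^t s}S/T_{\rho^t s}S^\perp$.

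Next I would feed this into the explicit description of the induced flows recalled just above the lemma. Fixing $b'\in\Sp(TS/TS^\perp)_s$ and choosing $b\in\Sp(M,\omega;S)_s$ with $[bw] = b'[w]$ for all $w\in W$, one has
$$\tr^t(b')[w] = [\tr^t(b)w] = [\rho^t_*(bw)], \qquad \tp^{ct}(b')[w] = [\tp^{ct}(b)w] = [\phi^{ct}_*(bw)]$$
for every $w\in W$. Since $bw\in T_sS$, the differential statement yields $[\rho^t_*(bw)] = [\phi^{ct}_*(bw)]$, hence $\tr^t(b')[w] = \tp^{ct}(b')[w]$ for all $w$, and therefore $\tr^t(b') = \tp^{ct}(b')$. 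As $s$ and $b'$ were arbitrary, this is the asserted equality on $\Sp(TS/TS^\perp)$.

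The argument is short, and I do not anticipate a genuine obstacle; the only care required is the one flagged above — working directly with the representatives $bw\in T_sS$, so that only the restriction of the pushforwards to $TS$ is ever invoked — together with the minor bookkeeping that $\rho^t$ and $\phi^{ct}$ may only be defined on subdomains of $M$, which is harmless since both preserve $S$ and the entire computation localizes there.
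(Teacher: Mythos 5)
Your proposal is correct and follows essentially the same route as the paper's own proof: both derive $\rho^t_*|_{TS}=\phi^{ct}_*|_{TS}$ from $\rho^t=\phi^{ct}$ on $S$, then substitute into the explicit formula for the induced flows on $\Sp(TS/TS^\perp)$, using that $bw\in T_sS$ for $w\in W$. Your opening caution that the equality cannot hold on $\Sp(M,\omega;S)$ because the pushforwards may differ transversally is a helpful remark, but the underlying argument is the same.
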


\begin{proof}
Fix $s\in S$ and $b'\in\Sp(TS/TS^\perp)_s$.  Let $b\in\Sp(M,\omega;S)_s$ be such that for all $w\in W$, $[bw]=b'[w]$.  Since $\rho^t=\phi^{ct}$ on $S$, we have $\rho_*^t|TS=\phi_*^{ct}|TS$.  Therefore, for any $w\in W$, $$\tr^t(b)w=\rho_*^t|_s(bw)=\phi^{ct}_*|_s(bw)=\tp^{ct}(b)w,$$ where we used the fact that $bw\in T_sS$.  Using the definitions of $\tp^t$ and $\tr^t$ on $\Sp(TS/TS^\perp)$, it follows that $$\tr^t(b')[w]=[\tr^t(b)w]=[\tp^{ct}(b)w]=\tp^{ct}(b')[w].$$  Thus $\tr^t=\tp^{ct}$ on $\Sp(TS/TS^\perp)$.
\end{proof}

Lemma \ref{lem:cvalue1} implies that $\txi_{fH}=c\txi_{H}$ on $\Sp(TS/TS^\perp)$.  Since $\txi_{fH}$ is just a constant multiple of $\txi_H$, it is clear that the flows of both vector fields have the same orbits.  Thus, if $\gamma_S$ has trivial holonomy over all closed orbits of $\tp^t$ on $\Sp(TS/TS^\perp)$, then it also has trivial holonomy over all closed orbits of $\tr^t$.  The following is now immediate.

\begin{theorem}\label{thm:stage1}
If $E$ is a quantized energy level for $(M,\omega,H)$ and $f:\R\rightarrow\R$ is a diffeomorphism, then $f(E)$ is a quantized energy level for $(M,\omega,f\circ H)$.
\end{theorem}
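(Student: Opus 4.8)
The plan is to notice first that the entire three-level structure $(P_S,\gamma_S)\rightarrow\Sp(TS/TS^\perp)\rightarrow S$ that enters Definition \ref{def:quantE} is built only from the submanifold $S$, the form $\omega$, and the fixed model subspace $W\subset V$, with no reference to the particular function whose level set $S$ happens to be. Indeed $T_sS^\perp$ is the $\omega$-orthogonal complement of $T_sS$, hence intrinsic to $S$, and $\Sp(M,\omega;S)$, $P^S$, $\gamma^S$, the associated bundle $P_S$, and the induced one-form $\gamma_S$ are then produced by the functorial constructions of Section \ref{subsec:mpcred} without any input from $H$. Since $S=H^{-1}(E)=(f\circ H)^{-1}(f(E))$, the quantized-energy condition for $(M,\omega,H)$ at $E$ and the one for $(M,\omega,f\circ H)$ at $f(E)$ are therefore assertions about the holonomy of the \emph{same} connection $\gamma_S$ on the \emph{same} bundle $P_S\rightarrow\Sp(TS/TS^\perp)$; the only difference between the two is which flow, $\tp^t$ (coming from $\xi_H$) or $\tr^t$ (coming from $\xi_{fH}$), is used to single out the closed orbits over which the holonomy is measured.

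Next I would record that $f$ being a diffeomorphism forces $\left.\dd{f(H)}{H}\right|_E$ to be a nonzero constant $c$, and that by the computation preceding Lemma \ref{lem:cvalue1} one has $\xi_{fH}=c\,\xi_H$ on $S$, hence $\rho^t=\phi^{ct}$ on $S$. Lemma \ref{lem:cvalue1} then yields $\tr^t=\tp^{ct}$ on $\Sp(TS/TS^\perp)$, so that $\txi_{fH}=c\,\txi_H$ there. As $\txi_{fH}$ is a nonzero constant multiple of $\txi_H$, the two vector fields have exactly the same unparametrized orbits on $\Sp(TS/TS^\perp)$, and in particular the same collection of closed orbits.

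Putting the two observations together, $\gamma_S$ has trivial holonomy over every closed orbit of $\tp^t$ if and only if it has trivial holonomy over every closed orbit of $\tr^t$. By Definition \ref{def:quantE} this says exactly that $E$ is a quantized energy level of $(M,\omega,H)$ if and only if $f(E)$ is a quantized energy level of $(M,\omega,f\circ H)$, which gives the asserted implication (and in fact its biconditional strengthening, foreshadowing the main result of Section \ref{sec:dynam}).

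There is no genuinely hard step here once Lemma \ref{lem:cvalue1} is available. The one point that does real work, and that I would take care to state explicitly, is the claim in the first paragraph that $(P_S,\gamma_S)\rightarrow\Sp(TS/TS^\perp)\rightarrow S$ depends only on $(S,\omega)$ and $W$; this is what guarantees that the two quantization conditions are being compared on identical data, with only the detecting flow changed. After that, the constant-rescaling of flows supplied by Lemma \ref{lem:cvalue1} makes the two conditions literally equivalent, and the theorem follows immediately.
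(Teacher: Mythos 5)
Your proposal is correct and follows the same route as the paper: show $\xi_{fH}=c\,\xi_H$ on $S$ because $\left.\frac{df(H)}{dH}\right|_{H(\cdot)}$ is constant on the level set, invoke Lemma \ref{lem:cvalue1} to get $\tr^t=\tp^{ct}$ on $\Sp(TS/TS^\perp)$, and conclude that the two flows have identical closed orbits, so the holonomy condition for $\gamma_S$ transfers. Your explicit remark that the bundle $(P_S,\gamma_S)\to\Sp(TS/TS^\perp)\to S$ depends only on $(S,\omega)$ and $W$ is a useful clarification that the paper leaves implicit, but it does not change the substance of the argument.
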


\subsection{Regular Level Set of Multiple Functions}\label{subsec:stage2}

Let $H_1,H_2:M\rightarrow\R$ be two smooth functions, and assume that there are $E_1,E_2\in\R$ such that $E_j$ is a regular value of $H_j$ for $j=1,2$, and $H_1^{-1}(E_1)=H_2^{-1}(E_2)$.  Let this shared level set be $S$.  Let the Hamiltonian vector fields $\xi_{H_1}$ and $\xi_{H_2}$ on $M$ have flows $\phi^t$ and $\rho^t$, respectively.  The induced flows $\tp^t$ and $\tr^t$ on $\Sp(TS/TS^\perp)$ are defined as in Section \ref{subsec:stage1}.  

The key observation in this scenario is that $TS$ and $TS^\perp$ do not depend on $H_1$ or $H_2$.  For all $s\in S$, we have $$T_sS^\perp=\mbox{span}\cb{\xi_{H_1}(s)}=\mbox{span}\cb{\xi_{H_2}(s)},$$ which implies that there is a map $c:S\rightarrow\R$ such that $\xi_{H_2}(s)=c(s)\xi_{H_1}(s)$ for all $s\in S$.  The map $c$ need not be constant on $S$, and $\xi_{H_1}$ and $\xi_{H_2}$ need not be parallel away from $S$.

In Lemma \ref{lem:cvalue1}, we were able to establish a relationship between $\tr^t$ and $\tp^t$.  In this case, we will not be able to relate $\tr^t$ and $\tp^t$ directly, but must instead work in terms of their time derivatives.  

For any $s\in S$, $\rho^t_*|_s$ and $\phi^t_*|_s$ preserve $TS$.  Therefore, for all $\zeta\in T_sS$, the tangent vectors $\lat{\dd{}{t}}{t=0}\lb{\rho^t_*|_s\zeta}$ and $\lat{\dd{}{t}}{t=0}\lb{\phi^t_*|_s\zeta}$ are elements of $T_\zeta TS$.   Note that the vector space $T_sS$ can be naturally identified with its tangent space $T_\zeta T_sS$, which is a subspace of $T_\zeta TS$.  Thus $\xi_{H_1}(s)\in T_sS$ can also be viewed as an element of $T_\zeta TS$.  This identification is used in the statement of the following result.

\begin{lemma}\label{lem:intuit}
For all $s\in S$ and all $\zeta\in T_sS$, $$\lat{\dd{}{t}}{t=0}\lb{\rho^t_*|_s\zeta}=c(s)\lat{\dd{}{t}}{t=0}\lb{\phi^t_*|_s\zeta}+(\zeta c)\xi_{H_1}(s).$$ 
\end{lemma}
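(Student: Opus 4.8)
The plan is to establish the identity by a direct computation in local coordinates, after which it reduces to the Leibniz rule. Since $E_1$ is a regular value of $H_1$, I would choose coordinates $(x^1,\dots,x^{2n})$ on a neighbourhood $U$ of $s$ with $S\cap U=\cb{x^{2n}=0}$, inducing coordinates $(x^i,v^i)$ on $TM|_U$ in which $TS$ is cut out by $x^{2n}=0$ and $v^{2n}=0$. Write $\xi_{H_1}=\sum_i a^i\,\partial_{x^i}$ and $\xi_{H_2}=\sum_i b^i\,\partial_{x^i}$ on $U$. The elementary fact I would use is that for any vector field $X=\sum_i X^i\partial_{x^i}$ with flow $\psi^t$ and any $\zeta=\sum_j\zeta^j\,\partial_{x^j}|_s$, the curve $t\mapsto\psi^t_*|_s\zeta$ in $TM$ is given in coordinates by $t\mapsto\big((\psi^t)^i(s),\ \sum_j\partial_{x^j}(\psi^t)^i(s)\,\zeta^j\big)$, so its velocity at $t=0$ has components $\big(X^i(s),\ (\zeta X^i)|_s\big)$; this follows by differentiating in $t$ at $0$ and using $\partial_t\partial_{x^j}(\psi^t)^i|_{t=0}=\partial_{x^j}X^i$. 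Applying this to $X=\xi_{H_1}$ (flow $\phi^t$) and $X=\xi_{H_2}$ (flow $\rho^t$) makes both sides of the asserted identity explicit.

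Next I would match components. In the base slots the claim reads $\xi_{H_2}(s)^i=c(s)\,\xi_{H_1}(s)^i$, which is exactly the relation $\xi_{H_2}(s)=c(s)\xi_{H_1}(s)$ valid on $S$; the term $(\zeta c)\,\xi_{H_1}(s)$ contributes nothing here because, under the stated identification, it is the vertical vector $\sum_i\xi_{H_1}(s)^i\,\partial_{v^i}|_\zeta\in T_\zeta T_sS$. In the fibre slots the claim reads $(\zeta b^i)|_s=c(s)\,(\zeta a^i)|_s+(\zeta c)\,\xi_{H_1}(s)^i$. Since $\zeta\in T_sS$, the directional derivatives $\zeta a^i$ and $\zeta b^i$ depend only on the restrictions $a^i|_S$ and $b^i|_S$; and on $S$ one has $b^i|_S=c\cdot a^i|_S$ as functions, so the Leibniz rule gives $\zeta(b^i|_S)=(\zeta c)\,a^i(s)+c(s)\,\zeta(a^i|_S)=(\zeta c)\,\xi_{H_1}(s)^i+c(s)\,(\zeta a^i)|_s$, which is precisely what is required. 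Conceptually this is just the Leibniz rule $(cX)^{\mathrm C}=c^{\mathrm V}X^{\mathrm C}+c^{\mathrm C}X^{\mathrm V}$ for the tangent lift of vector fields on $S$, applied to $\xi_{H_2}|_S=c\,\xi_{H_1}|_S$, but the coordinate computation avoids invoking that formalism.

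I do not expect a genuine obstacle here; the only thing that needs care is the bookkeeping of identifications. One must note that $\phi^t$ and $\rho^t$ preserve $S$, so $\phi^t_*$ and $\rho^t_*$ preserve $TS$ and the two time-derivatives indeed lie in $T_\zeta TS$; that $(\zeta c)\xi_{H_1}(s)$ is to be read through the canonical isomorphism $T_sS\cong T_\zeta T_sS$, i.e. as the vertical lift of $\xi_{H_1}(s)$ at $\zeta$; and that $\zeta c$ is well defined even though $c$ is only a function on $S$, precisely because $\zeta$ is tangent to $S$. As a coordinate-free check that the statement is well posed, applying $\pi_*\colon T_\zeta TS\to T_sS$ to the difference $\lat{\dd{}{t}}{t=0}\lb{\rho^t_*|_s\zeta}-c(s)\lat{\dd{}{t}}{t=0}\lb{\phi^t_*|_s\zeta}$ returns $\xi_{H_2}(s)-c(s)\xi_{H_1}(s)=0$, so this difference is automatically vertical and may be legitimately identified with the element $(\zeta c)\xi_{H_1}(s)$ of $T_sS$ found above.
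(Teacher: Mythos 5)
Your proof is correct and follows essentially the same route as the paper: a local coordinate computation in which the velocity of $t\mapsto\psi^t_*|_s\zeta$ is identified with the pair $(X(s),\zeta X|_s)$, followed by the Leibniz rule applied to $\xi_{H_2}=c\,\xi_{H_1}$ on $S$. The paper streamlines the bookkeeping by treating $S$ as a standalone $(2n-1)$-manifold from the outset (so the restriction issue you handle explicitly never arises), while you use coordinates on $M$ adapted to $S$; this is a cosmetic rather than substantive difference.
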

\begin{proof}
We ignore $M$, and treat $S$ as a $(2n-1)$-dimensional manifold.  The vector fields $\xi_{H_1}$ and $\xi_{H_2}$ on $S$ have flows $\phi^t$ and $\rho^t$, respectively, and satisfy $\xi_{H_2}=c\xi_{H_1}$.

Fix $s\in S$, and let $U$ be a coordinate neighborhood for $s$ with coordinates $X=(X_1,\ldots,X_{2n-1})$.  We write $\phi^t=(\phi^t_1,\ldots,\phi^t_{2n-1})$ and $\rho^t=(\rho_1^t,\ldots,\rho_{2n-1}^t)$ with respect to the local coordinates.  Then $\phi_*^t|_s$ becomes a $(2n-1)\times(2n-1)$ matrix with the $(j,k)$th entry given by $$\lb{\phi^t_*|_s}_{jk}=\left.\pp{\phi^t_j(X)}{X_k}\right|_{X=s},$$ and similarly for $\rho^t_*|_s$.  Also, since $\xi_{H_2}=c\xi_{H_1}$, we have   $$\left.\dd{}{t}\right|_{t=0}\rho_j^t=c\left.\dd{}{t}\right|_{t=0}\phi_j^t,\ \ j=1,\ldots,2n-1,$$ everywhere on $U$.

Let $\zeta\in T_sS$ be arbitrary.  Then $\zeta=\sum_{k=1}^{2n-1}a_k\pp{}{X_k}$ for some coefficients $a_1,\ldots,a_{2n-1}\in\R$, and $$\lb{\phi^t_*|_s\zeta}_j=\left.\sum_{k=1}^{2n-1}a_k\pp{\phi^t_j(X)}{X_k}\right|_{X=s}=\zeta\phi_j^t.$$  The same process establishes that $(\rho^t_*|_{s}\zeta)_j=\zeta\rho_j^t$.  When we take the time derivative of the latter equation, we find $$\left.\dd{}{t}\right|_{t=0}\lb{\rho_*^t|_s\zeta}_j=\zeta\lb{\left.\dd{}{t}\right|_{t=0}\rho^t_j}=\zeta\lb{c\left.\dd{}{t}\right|_{t=0}\phi^t_j}.$$  Now we apply the Leibniz rule and recall that $\left.\dd{}{t}\right|_{t=0}\phi^t_j(s)=\xi_{H_1}(s)_j$.  The result is 
\begin{equation}\label{eq:result}
\left.\dd{}{t}\right|_{t=0}\lb{\rho_*^t|_s\zeta}_j=c(s)\left.\dd{}{t}\right|_{t=0}(\phi_*^t|_s\zeta)_j+(\zeta c)\xi_{H_1}(s)_j.
\end{equation}

The coordinates on $U$ naturally generate coordinates for $TS|_U$, which in turn provide a basis for $T_\zeta TS$.  In terms of that basis,  $$\lat{\dd{}{t}}{t=0}\lb{\rho^t_*|_s\zeta}=\lb{\xi_{H_2}(s)_1,\ldots,\xi_{H_2}(s)_{2n-1},\left.\dd{}{t}\right|_{t=0}\lb{\rho_*^t|_s\zeta}_1,\ldots,\left.\dd{}{t}\right|_{t=0}\lb{\rho_*^t|_s\zeta}_{2n-1}},$$ with the analogous expression for $\lat{\dd{}{t}}{t=0}\lb{\phi^t_*|_s\zeta}$, and  $$\xi_{H_1}(s)=(0,\ldots,0,\xi_{H_1}(s)_1,\ldots,\xi_{H_1}(s)_{2n-1})$$ as an element of $T_\zeta TS$.  Substituting Equation (\ref{eq:result}) into the expression for $\lat{\dd{}{t}}{t=0}\lb{\rho^t_*|_s\zeta}$ shows that  $$\lat{\dd{}{t}}{t=0}\lb{\rho^t_*|_s\zeta}=c(s)\lat{\dd{}{t}}{t=0}\lb{\phi^t_*|_s\zeta}+(\zeta c)\xi_{H_1}(s),$$ as desired.  
\end{proof}

\begin{lemma}\label{lem:cvalue2}
For all $s\in S$ and all $b'\in\Sp(TS/TS^\perp)_s$, $$\txi_{H_2}(b')=c(s)\txi_{H_1}(b').$$
\end{lemma}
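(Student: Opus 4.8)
The plan is to transfer the pointwise relation of Lemma~\ref{lem:intuit} up to the associated bundle $\Sp(TS/TS^\perp)$. Recall that $\txi_{H_1}$ and $\txi_{H_2}$ are, by definition, the infinitesimal generators of the induced flows $\tp^t$ and $\tr^t$ on $\Sp(TS/TS^\perp)$, so that for a given $b'\in\Sp(TS/TS^\perp)_s$ we have $\txi_{H_j}(b')=\lat{\dd{}{t}}{t=0}(\text{the }j\text{th flow applied to }b')$. The strategy is to fix $b'\in\Sp(TS/TS^\perp)_s$, pick a representative $b\in\Sp(M,\omega;S)_s$ with $[bw]=b'[w]$ for all $w\in W$, and compute both $\lat{\dd{}{t}}{t=0}\tp^t(b')$ and $\lat{\dd{}{t}}{t=0}\tr^t(b')$ using the explicit description $\tp^t(b')[w]=[\tp^t(b)w]=[\phi^t_*|_s(bw)]$ recalled in Section~\ref{subsec:stage1}.

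First I would set $\zeta=bw\in T_sS$ for each $w\in W$, so that the curve $t\mapsto\tr^t(b')[w]$ in $TS/TS^\perp$ is the image under the quotient projection $\pi:TS\to TS/TS^\perp$ of the curve $t\mapsto\rho^t_*|_s\zeta$ in $TS$. Differentiating at $t=0$ and applying $\pi_*$ to the identity in Lemma~\ref{lem:intuit} gives
\[
\lat{\dd{}{t}}{t=0}\bigl(\tr^t(b')[w]\bigr)=c(s)\,\lat{\dd{}{t}}{t=0}\bigl(\tp^t(b')[w]\bigr)+(\zeta c)\,\pi_*\bigl(\xi_{H_1}(s)\bigr).
\]
The crucial simplification is that $\pi_*(\xi_{H_1}(s))=0$: under the identification of $T_\zeta T_sS$ with $T_sS$ used in Lemma~\ref{lem:intuit}, the vector $\xi_{H_1}(s)$ lies in $T_sS^\perp$ (indeed $T_sS^\perp=\operatorname{span}\{\xi_{H_1}(s)\}$), which is precisely the kernel of the quotient map $T_sS\to T_sS/T_sS^\perp$; hence the $(\zeta c)$-term dies after projection. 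What remains is $\lat{\dd{}{t}}{t=0}(\tr^t(b')[w])=c(s)\lat{\dd{}{t}}{t=0}(\tp^t(b')[w])$ for every $w\in W$, and since $\txi_{H_j}(b')$ is determined as a tangent vector to $\Sp(TS/TS^\perp)$ by its action on all the curves $t\mapsto(\cdot)(b')[w]$ (the frame bundle point $b'$ is exactly the data of where each $[w]$ goes), this yields $\txi_{H_2}(b')=c(s)\txi_{H_1}(b')$.

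The main obstacle is bookkeeping the two layers of tangent spaces cleanly: Lemma~\ref{lem:intuit} lives in $T_\zeta TS$, which carries both a ``horizontal'' $T_sS$ component (the basepoint displacement $\xi_{H}(s)$) and a ``vertical'' $T_sS$ component (the fiber derivative $\lat{\dd{}{t}}{t=0}(\phi^t_*|_s\zeta)$), and one must check that pushing forward to $T(TS/TS^\perp)$ along $\pi$ sends the fiberwise-linear part through the quotient $T_sS\to T_sS/T_sS^\perp$ faithfully while annihilating the basepoint part. Once that is set up — using the coordinate description from the proof of Lemma~\ref{lem:intuit} if a hands-on argument is preferred — the conclusion is forced. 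I would also remark that, just as after Lemma~\ref{lem:cvalue1}, the immediate consequence is that $\txi_{H_1}$ and $\txi_{H_2}$ are pointwise proportional on $\Sp(TS/TS^\perp)$, but here $c$ varies along $S$, so the flows $\tp^t$ and $\tr^t$ need not coincide even up to reparametrization; nonetheless they share the same (unparametrized) orbits, which is all that the holonomy condition in Definition~\ref{def:quantE} sees — that final step belongs to the theorem that follows rather than to this lemma.
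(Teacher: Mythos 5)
Your proposal is correct and follows essentially the same route as the paper's proof: express the induced flow on $\Sp(TS/TS^\perp)$ via representatives $\zeta=bw\in T_sS$, differentiate, apply Lemma~\ref{lem:intuit} in $T_\zeta TS$, and observe that the extra $(\zeta c)\xi_{H_1}(s)$ term is annihilated by the pushforward of the quotient map $TS\to TS/TS^\perp$ because $\xi_{H_1}(s)\in T_sS^\perp$ is the vertical kernel direction. The paper simply makes the final identification more concrete by fixing a symplectic basis of $W$ and writing frame bundle points as ordered tuples, so that each $T_{[\zeta_j]}(TS/TS^\perp)\cong T_{\zeta_j}TS/T_sS^\perp$ appears explicitly; your more structural phrasing is a stylistic difference, not a different argument.
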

\begin{proof}
Within the model vector space $V$, fix a symplectic basis $(\hx_1,\ldots,\hx_n,\hy_1,\ldots,\hy_n)$.  Let $$W=\mbox{span}\cb{\hx_1,\ldots,\hx_n,\hy_2,\ldots,\hy_n},$$ so that $$W^\perp=\mbox{span}\cb{\hx_1}\ \ \mbox{and}\ \ W/W^\perp=\mbox{span}\cb{[\hx_2],\ldots,[\hx_n],[\hy_2],\ldots,[\hy_n]}.$$  For convenience, let $(\hz_1,\ldots,\hz_{2n})=(\hx_1,\ldots,\hx_n,\hy_2,\ldots,\hy_n,\hy_1)$.  Then  $$W=\mbox{span}\cb{\hz_1,\ldots,\hz_{2n-1}},\ \ W^\perp=\mbox{span}\cb{\hz_1},\ \ W/W^\perp=\mbox{span}\cb{[\hz_2],\ldots,[\hz_{2n-1}]}.$$

For this proof, we will not view an element of the symplectic frame bundle as a map, but as a symplectic basis for a tangent space.  Explicitly, over $m\in M$, we identify $b\in\Sp(M,\omega)_m$ with the ordered $2n$-tuple $(\zeta_1,\ldots,\zeta_{2n})\in\lb{T_mM}^{2n}$, where $b\hz_j=\zeta_j$ for $j=1,\ldots,2n$.  Similarly, over $s\in S$, we identify $b'\in\Sp(TS/TS^\perp)_s$ with the ordered $(2n-2)$-tuple $([\zeta_2],\ldots,[\zeta_{2n}])\in\lb{T_sS/T_sS^\perp}^{2n-2}$, where $b'[\hz_j]=[\zeta_j]$ for $j=2,\ldots,2n-1$. 

Consider $s\in S$ and $b\in\Sp(M,\omega;S)_s$.  Let $b\hz_j=\zeta_j\in T_sM$ for $j=1,\ldots,2n$.  Then $\zeta_j\in T_sS$ for $j=1,\ldots,2n-1$ and $\zeta_1\in T_sS^\perp$.  The flow $\tr^t$ on $\Sp(M,\omega;S)$, evaluated at $b$, is given by $$\tr^t(b)=\rho^t_*|_m\circ b=\lb{\rho^t_*|_m\zeta_1,\ldots,\rho^t_*|_m\zeta_{2n}},$$ and the analogous expression holds for $\tp^t$.  

When we descend to $\Sp(TS/TS^\perp)$, the image of $b$ is the element $b'=([\zeta_2],\ldots,[\zeta_{2n-1}])\in(T_sS/T_sS^\perp)^{2n-2}$.  The induced flow $\tr^t$ on $\Sp(TS/TS^\perp)$, evaluated at $b'$, is $$\tr^t(b')=\lb{[\rho^t_*|_s\zeta_2],\ldots,[\rho^t_*|_s\zeta_{2n-1}]}.$$  As $t$ varies, this expression describes a curve in $(TS/TS^\perp)^{2n-2}$, through $b'$.  The tangent vector to this curve at $b'$ is 
\begin{align*}
\txi_{H_2}(b')=\lat{\dd{}{t}}{t=0}\tr^t(b')=&\lb{\lat{\dd{}{t}}{t=0}[\rho^t_*|_s\zeta_2],\ldots,\lat{\dd{}{t}}{t=0}[\rho^t_*|_s\zeta_{2n-1}]}\\
&\rule{0.5in}{0in}\in T_{[\zeta_2]}(TS/TS^\perp)\times\ldots\times T_{[\zeta_{2n-1}]}(TS/TS^\perp).
\end{align*}

The pushforward of the quotient map $TS\rightarrow TS/TS^\perp$, based at $\zeta_j$, is a linear surjection $T_{\zeta_j}TS\rightarrow T_{[\zeta_j]}(TS/TS^\perp)$ whose kernel is $T_{\zeta_j} T_sS^\perp$.  If we identify $T_{\zeta_j}T_sS^\perp$ with $T_sS^\perp$, then we find a natural isomorphism between $T_{[\zeta_j]}(TS/TS^\perp)$ and $T_{\zeta_j}TS/T_sS^\perp$.  Applying that isomorphism to each component of $\txi_{H_2}(b')$ yields  
\begin{align*}
\txi_{H_2}(b')=&\lb{\ls{\lat{\dd{}{t}}{t=0}(\rho^t_*|_s\zeta_2)},\ldots,\ls{\lat{\dd{}{t}}{t=0}(\rho^t_*|_s\zeta_{2n-1})}}\\
&\rule{0.5in}{0in}\in T_{\zeta_2}TS/T_sS^\perp\times\ldots\times T_{\zeta_{2n-1}}TS/T_sS^\perp.
\end{align*}
Since $\zeta_j\in T_sS$ for $j=2,\ldots,2n-1$, we can use Lemma \ref{lem:intuit}:  $$\lat{\dd{}{t}}{t=0}\lb{\rho^t_*|_s\zeta_j}=c(s)\lat{\dd{}{t}}{t=0}\lb{\phi^t_*|_s\zeta_j}+(\zeta_jc)\xi_{H_1}(s),$$ where each term in the above equation is viewed as an element of $T_{\zeta_j}TS$.  Upon descending to $T_{\zeta_j}TS/T_sS^\perp$, the multiple of $\xi_{H_1}(s)$ vanishes and we find that $$\ls{\lat{\dd{}{t}}{t=0}\lb{\rho^t_*|_s\zeta_j}}=c(s)\ls{\lat{\dd{}{t}}{t=0}\lb{\phi^t_*|_s\zeta_j}}.$$  Therefore
\begin{align*}
\txi_{H_2}(b')&=\lb{c(s)\ls{\lat{\dd{}{t}}{t=0}(\phi^t_*|_s\zeta_2)},\ldots,c(s)\ls{\lat{\dd{}{t}}{t=0}(\phi^t_*|_s\zeta_{2n-1})}}\\
&=c(s)\txi_{H_1}(b').
\end{align*}
This completes the proof.
\end{proof}

Thus $\txi_{H_1}$ and $\txi_{H_2}$ are parallel on $\Sp(TS/TS^\perp)$, although the multiplicative factor that relates them is not necessarily constant over $S$.  This implies that $\tp^t$ and $\tr^t$ have identical orbits in $\Sp(TS/TS^\perp)$, and if $\gamma_S$ has trivial holonomy over the closed orbits of $\tp^t$, then the same must also be true for $\tr^t$.  We can now conclude the result that was the objective of this section.

\begin{theorem}\label{thm:stage2}
If $H_1,H_2:M\rightarrow\R$ are smooth functions such that $H_1^{-1}(E_1)=H_2^{-1}(E_2)$ for regular values $E_j$ of $H_j$, $j=1,2$, then $E_1$ is a quantized energy level for $(M,\omega,H_1)$ if and only if $E_2$ is a quantized energy level for $(M,\omega,H_2)$.
\end{theorem}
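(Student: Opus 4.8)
The plan is to obtain Theorem~\ref{thm:stage2} as an essentially formal consequence of Lemma~\ref{lem:cvalue2}, along the lines of the discussion preceding the statement. First I would note that the hypothesis that $E_1$ and $E_2$ are regular values forces $dH_j$, and hence $\xi_{H_j}$, to be nowhere zero on $S$ for $j=1,2$; since $\xi_{H_1}(s)$ and $\xi_{H_2}(s)$ both span the one-dimensional space $T_sS^\perp$, the function $c:S\rightarrow\R$ determined by $\xi_{H_2}(s)=c(s)\xi_{H_1}(s)$ is continuous and nowhere vanishing, so it has a constant sign on each connected component of $S$.

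By Lemma~\ref{lem:cvalue2}, on the bundle $\Sp(TS/TS^\perp)$ we have $\txi_{H_2}=c\,\txi_{H_1}$, where $c$ now denotes the pullback of the above function along $\Sp(TS/TS^\perp)\rightarrow S$. Because $c$ is nowhere zero, $\txi_{H_1}$ and $\txi_{H_2}$ vanish at exactly the same points, and away from that common zero set each integral curve of $\txi_{H_2}$ is a reparametrization of an integral curve of $\txi_{H_1}$ --- orientation-preserving or orientation-reversing according to the sign of $c$ on the component in question. Consequently $\tp^t$ and $\tr^t$ have exactly the same orbits as subsets of $\Sp(TS/TS^\perp)$: their fixed points coincide, and a periodic integral curve of one flow is, after reparametrization, a periodic integral curve of the other traversing the same loop.

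Finally I would observe that the $\gamma_S$-holonomy around a closed orbit depends only on the image of that orbit as an oriented loop, and that it is unchanged under reparametrization and merely inverted under reversal of orientation; in either case it is trivial over a given closed orbit of $\tp^t$ exactly when it is trivial over the corresponding closed orbit of $\tr^t$. Hence $\gamma_S$ has trivial holonomy over every closed orbit of $\tp^t$ on $\Sp(TS/TS^\perp)$ if and only if it has trivial holonomy over every closed orbit of $\tr^t$, and by Definition~\ref{def:quantE} this says precisely that $E_1$ is a quantized energy level for $(M,\omega,H_1)$ if and only if $E_2$ is a quantized energy level for $(M,\omega,H_2)$.

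The real work is already done in Lemmas~\ref{lem:intuit} and~\ref{lem:cvalue2}; for the theorem itself the only point that needs attention is the nowhere-vanishing of $c$, which is where the regular-value hypothesis enters --- without it the zero sets of $\xi_{H_1}$ and $\xi_{H_2}$ on $S$ could differ and the orbit structures of $\tp^t$ and $\tr^t$ would no longer coincide. I expect no computational obstacle beyond this, only the mild care needed to phrase the reparametrization-invariance of the holonomy (including the orientation-reversing case) correctly.
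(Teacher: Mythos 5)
Your proposal is correct and follows essentially the same route as the paper: Lemma~\ref{lem:cvalue2} gives $\txi_{H_2}=c\,\txi_{H_1}$ on $\Sp(TS/TS^\perp)$, from which the paper (in the paragraph immediately preceding the theorem statement, which is all the proof it offers) concludes that the two lifted flows have identical orbits and hence the same holonomy condition. You merely make explicit two points the paper leaves tacit --- that the regular-value hypothesis forces $c$ to be nowhere zero, and that holonomy is invariant under reparametrization and only inverted under orientation reversal --- which is a reasonable tightening but not a different argument.
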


\section{The Harmonic Oscillator}\label{sec:sho}

In this section, we apply our quantized energy condition to the $n$-dimensional harmonic oscillator.  The metaplectic-c quantization of this system has already been examined in \cite{rob1} and \cite{rr1}, so it will come as no surprise that we obtain the correct quantized energy levels:  $$E_N=\hbar\lb{N+\frac{n}{2}},\ \ N\in\Z.$$  Our first objective in studying this example is to present a useful computational technique:  we will locally change coordinates from Cartesian to symplectic polar on the base manifold, then show how to lift this change to the symplectic frame bundle and prequantization bundle.  Once symplectic polar coordinates have been established, we will turn to our second objective, which is to construct an explicit example that illustrates the principal of dynamical invariance.

\subsection{Initial Choices}\label{subsec:shoinit}

Let $(\hx_1,\ldots,\hx_n,\hy_1,\ldots,\hy_n)$ be a basis for the model vector space $V$ such that $\Omega=\sum_{j=1}^n\hx_j^*\wedge\hy_j^*$.  All elements of $V$ will be written as ordered $2n$-tuples with respect to this basis.  Assume that $V$ is identified with $\C^n$ by mapping the point $(a_1,\ldots,a_n,b_1,\ldots,b_n)\in V$ to the point $(b_1+ia_1,\ldots,b_n+ia_n)\in\C^n$.  Then the complex structure $J$ on $V$ is given in matrix form by $J=\four{0}{I}{-I}{0}$, where $I$ is the $n\times n$ identity matrix.

A note concerning notation:  given expressions $a_j$ and $b_j$, $j=1,\ldots,n$, we let $\lb{a_j}_{1\leq j\leq n}$ represent the $n\times n$ diagonal matrix $\mbox{diag}(a_1,\ldots,a_n)$, and we let $\two{a_j}{b_j}_{1\leq j\leq n}$ represent the $2n\times 1$ column vector $(a_1,\ldots,a_n,b_1,\ldots,b_n)^T$.

Let $M=\R^{2n}$, with Cartesian coordinates $(p_1,\ldots,p_n,q_1,\ldots,q_n)$ and symplectic form $\omega=\sum_{j=1}^n dp_j\wedge dq_j$.  The energy function for the harmonic oscillator is  $H=\frac{1}{2}\sum_{j=1}^n\lb{p_j^2+q_j^2}$, which has Hamiltonian vector field  $$\xi_{H}=\sum_{j=1}^n\lb{q_j\pp{}{p_j}-p_j\pp{}{q_j}}.$$  Let this vector field have flow $\phi^t$ on $M$.  

There is a global trivialization of $TM$ given by identifying $\hx_j\mapsto\left.\pp{}{p_j}\right|_{m}$, $\hy_j\mapsto\left.\pp{}{q_j}\right|_{m}$ at every point $m\in M$, which induces a global trivialization of $\Sp(M,\omega)$.  
Let $P$ be the trivial bundle $M\times\Mp^c(V)$, with bundle projection map $P\maps{\Pi}M$.  Define the map $P\maps{\Sigma}\Sp(M,\omega)$ by $$\Sigma(m,a)=(m,\sigma(a)),\ \ \forall m\in M,a\in\Mp^c(V),$$ where the ordered pair on the right-hand side is written with respect to the global trivialization stated above.  Then $(P,\Sigma)$ is a metaplectic-c structure for $(M,\omega)$.  

On $M$, define the one-form $\beta$ by $$\beta=\frac{1}{2}\sum_{j=1}^n(p_jdq_j-q_jdp_j),$$ so that $d\beta=\omega$.  Let $\vartheta_0$ be the trivial connection on the product bundle $M\times\Mp^c(V)$, and define the one-form $\gamma$ on $P$ by $$\gamma=\frac{1}{i\hbar}\Pi^*\beta+\frac{1}{2}\eta_*\vartheta_0.$$  Then $(P,\Sigma,\gamma)$ is a metaplectic-c prequantization for $(M,\omega)$.  In fact, it is the unique metaplectic-c prequantization up to isomorphism, since $M$ is contractible.

Fix $E>0$, a regular value of $H$, and let $S=H^{-1}(E)$.  Let $m_0\in S$ be given.  Observe that $H$ and $\omega$ are invariant under following transformations:  (1) rotation of the $p_jq_j$-plane about the origin by any angle, for any $j$, and (2) simultaneous rotations of the $p_jp_k$- and $q_jq_k$-planes about the origin by the same angle, for any $j\neq k$.  Thus, after suitable rotations, we can assume that $m_0=(p_{01},\ldots,p_{0n},0,\ldots,0)$ in Cartesian coordinates, where $p_{0j}\neq 0$ for all $j$.  It is easily established that $$\phi^t(m_0)=\four{(\cos t)_{1\leq j\leq n}}{(\sin t)_{1\leq j\leq n}}{(-\sin t)_{1\leq j\leq n}}{(\cos t)_{1\leq j\leq n}}\two{p_{0j}}{0}_{1\leq j\leq n}.$$  This expression describes a periodic orbit with period $2\pi$.  Let $\sC$ be the orbit of $\xi_H$ through $m_0$:  $\sC=\cb{\phi^t(m_0):t\in\R}$.

\subsection{From Cartesian to Symplectic Polar Coordinates}\label{subsec:cvars}

\subsubsection{On the Manifold $M$}

By symplectic polar coordinates, we mean the local coordinates $(s_1,\ldots,s_n,\theta_1,\ldots,\theta_n)$ given by 
\begin{equation}\label{eq:ctop}
s_j=\frac{1}{2}\lb{p_j^2+q_j^2},\ \ \theta_j=\tan^{-1}\lb{\frac{q_j}{p_j}},\ \ j=1,\ldots,n,
\end{equation}
whenever these expressions are defined.  The polar angles $\theta_j$ are all defined modulo $2\pi$.  For later reference, the inverse coordinate transformations are 
\begin{equation}\label{eq:ptoc}
p_j=\sqrt{2s_j}\cos\theta_j,\ \ q_j=\sqrt{2s_j}\sin\theta_j,\ \ j=1,\ldots,n.
\end{equation}  Let $$U=\cb{(p_1,\ldots,p_n,q_1,\ldots,q_n)\in M:p_j^2+q_j^2>0,\ j=1,\ldots,n},$$ so that symplectic polar coordinates and the corresponding vector fields $\cb{\pp{}{s_1},\ldots,\pp{}{s_n},\pp{}{\theta_1},\ldots,\pp{}{\theta_n}}$ are defined everywhere on $U$.  Observe that the orbit $\sC$ is contained in $U$.  We will construct a local trivialization for $\Sp(M,\omega)$ over $U$, and a local trivialization for $P$ over $\sC$.  

When we convert to symplectic polar coordinates on $U$, we find $$\omega=\sum_{j=1}^n ds_j\wedge d\theta_j,$$ which implies that for all $m\in U$, $\cb{\lat{\pp{}{s_1}}{m},\ldots,\lat{\pp{}{s_n}}{m},\lat{\pp{}{\theta_1}}{m},\ldots,\lat{\pp{}{\theta_n}}{m}}$ is a symplectic basis for $T_mM$.  Further, 
\begin{equation}\label{eq:miscpolar}
\beta=\sum_{j=1}^n s_jd\theta_j,\ \ H=\sum_{j=1}^n s_j,\ \ \xi_{H}=-\sum_{j=1}^n\pp{}{\theta_j},
\end{equation}
and $m_0=(s_{01},\ldots,s_{0n},0,\ldots,0)$, where $s_{0j}=\frac{1}{2}p_{0j}^2$, $j=1,\ldots,n$.  The curve $\sC$ is $$\sC=\cb{(s_{01},\ldots,s_{0n},\tau,\ldots,\tau):\tau\in\R/2\pi\Z}.$$

The goal is to lift this change of coordinates to the symplectic frame bundle, then to the metaplectic-c prequantization.  To facilitate this process, we introduce the following notation.  Let $\Phi_c:U\rightarrow\R^{2n}$ be the Cartesian coordinate map, and let $U_c=\Phi_c(U)$.  Similarly, let $\Phi_p:U\rightarrow\R^{n}\times(R/2\pi\Z)^n$ be the symplectic polar coordinate map, and let $U_p=\Phi_p(U)$.  Let $F$ denote the transition map $\Phi_p\circ\Phi_c^{-1}$.  Then we have the following commutative diagram.
\begin{center}
$\xymatrix{
& U \ar[ld]_{\Phi_c} \ar[rd]^{\Phi_p} & \\
**[r]U_c\subset\R^{2n} \ar[rr]^F & & **[r]U_p\subset\R^{n}\times(\R/2\pi\Z)^n
}$
\end{center}
This will serve as a model for the changes of coordinates on $\Sp(M,\omega)$ and $P$.

\subsubsection{On the Symplectic Frame Bundle $\Sp(M,\omega)$}

Recall that an element of the fiber $\Sp(M,\omega)_m$ is a symplectic isomorphism from $V$ to $T_mM$.  Over $U$, we define the sections $b_c$ and $b_p$ of $\Sp(M,\omega)$ as follows.  At each $m\in U$, they are given by
\begin{eqnarray*}
b_c(m):V\rightarrow T_mM\ \ \mbox{such that}\ \ \two{\hx_j}{\hy_j}_{1\leq j\leq n}\mapsto\two{\lat{\pp{}{p_j}}{m}}{\lat{\pp{}{q_j}}{m}}_{1\leq j\leq n},\\
b_p(m):V\rightarrow T_mM\ \ \mbox{such that}\ \ \two{\hx_j}{\hy_j}_{1\leq j\leq n}\mapsto\two{\lat{\pp{}{s_j}}{m}}{\lat{\pp{}{\theta_j}}{m}}_{1\leq j\leq n}.
\end{eqnarray*}  
Using these sections, we define two maps $\widetilde{\Phi}_c$, $\widetilde{\Phi}_p$ on $\Sp(M,\omega)$ over $U$:
\begin{eqnarray*}
\widetilde{\Phi}_c:\Sp(M,\omega)|_U\rightarrow U_c\times\Sp(V)\ &\mbox{such that}&\ \widetilde{\Phi}_c(b_c(m)\cdot g)=(\Phi_c(m),g),\ \ \forall m\in U,\ \forall g\in\Sp(V),\\
\widetilde{\Phi}_p:\Sp(M,\omega)|_U\rightarrow U_p\times\Sp(V)\ &\mbox{such that}&\ \widetilde{\Phi}_p(b_p(m)\cdot g)=(\Phi_p(m),g),\ \ \forall m\in U,\ \forall g\in\Sp(V).
\end{eqnarray*}
In other words, the section $b_c$ determines the Cartesian trivialization of $\Sp(M,\omega)$ over $U$, and $b_p$ determines the symplectic polar trivialization.  Lifting the change of coordinates to $\Sp(M,\omega)|_{U}$ is equivalent to finding a map $\widetilde{F}:U_c\times\Sp(V)\rightarrow U_p\times\Sp(V)$ that is a lift of $F:U_c\rightarrow U_p$ and such that the following commutes.
\begin{center}
$\xymatrix{
& \Sp(M,\omega)|_{U} \ar[ld]_{\widetilde{\Phi}_c} \ar[rd]^{\widetilde{\Phi}_p} & \\
U_c\times\Sp(V) \ar[rr]^{\widetilde{F}} & & U_p\times\Sp(V)
}$
\end{center}
If such a map $\widetilde{F}$ exists, then it must satisfy $$\widetilde{F}\circ\widetilde{\Phi}_c(b_c)=\widetilde{\Phi}_p(b_c),$$ and indeed, once this condition is satisfied, then the value of $\widetilde{F}$ everywhere else will be determined by the $\Sp(V)$ group action and the fact that $\widetilde{F}$ is a lift of $F$.  Let us then evaluate $\widetilde{\Phi}_p(b_c)$.  

For all $m\in U$, let $$G(m)=\four{\lb{\lat{\pp{p_j}{s_j}}{m}}_{1\leq j\leq n}}{\lb{\lat{\pp{q_j}{s_j}}{m}}_{1\leq j\leq n}}{\lb{\lat{\pp{p_j}{\theta_j}}{m}}_{1\leq j\leq n}}{\lb{\lat{\pp{q_j}{\theta_j}}{m}}_{1\leq j\leq n}},$$ so that $$G(m)\two{\lat{\pp{}{p_j}}{m}}{\lat{\pp{}{q_j}}{m}}_{1\leq j\leq n}=\two{\lat{\pp{}{s_j}}{m}}{\lat{\pp{}{\theta_j}}{m}}_{1\leq j\leq n}.$$
Then $G(m)$ is a symplectic matrix, and can be viewed as an element of $\Sp(V)$.  From the definitions of $b_c(m)$ and the group action, we see that 
$$b_c(m):G(m)\two{\hx_j}{\hy_j}_{1\leq j\leq n}\mapsto G(m)\two{\lat{\pp{}{p_j}}{m}}{\lat{\pp{}{q_j}}{m}}_{1\leq j\leq n}=\two{\lat{\pp{}{s_j}}{m}}{\lat{\pp{}{\theta_j}}{m}}_{1\leq j\leq n}.$$  Thus $$\widetilde{\Phi}_p(b_c(m))=(\Phi_p(m),G(m)),$$ and more generally, $$\widetilde{\Phi}_p(b_c(m)\cdot g)=(\Phi_p(m),G(m)g),\ \ \forall g\in\Sp(V).$$  Hence we define the map $\widetilde{F}$ by $$\widetilde{F}(\Phi_c(m),g)=(\Phi_p(m),G(m)g),\ \ \forall m\in U,\forall g\in\Sp(V).$$  

Observe, using Equation (\ref{eq:ptoc}), that the entries of $G(m)$ are singly defined with respect to the angles $\theta_j$, so $G(m)$ is singly defined as $m$ traverses the curve $\sC$, or any other closed path through $U$.  This demonstrates that we can change coordinates from Cartesian to polar everywhere on $\Sp(M,\omega)|_U$.

\subsubsection{On the Metaplectic-c Prequantization $P$}

When we lift the change of variables to $P$, we restrict our attention further from $U$ to $\sC$.  Let $\sC_c=\Phi_c(\sC)$ and let $\sC_p=\Phi_p(\sC)$.  To emphasize that all of our calculations take place over the closed curve $\sC$, we let $m(\tau)=(s_{01},\ldots,s_{0n},\tau,\ldots,\tau)\in\sC$ for all $\tau\in\R/2\pi\Z$, and we abbreviate $G(m(\tau))$ by $G(\tau)$.  Then $G(\tau)$ is a closed loop through $\Sp(V)$ with period $2\pi$.

Recall that $P=M\times\Mp^c(V)$, and that $\Sigma:P\rightarrow\Sp(M,\omega)$ is given by $\Sigma(m,a)=(m,\sigma(a))$ for all $(m,a)\in P$, where the right-hand side is written with respect to the Cartesian trivialization.  In other words, $P$ was constructed to be consistent with Cartesian coordinates.  To formalize this property, we define $$\hat{\Phi}_c:P|_{\sC}\rightarrow\sC_c\times\Mp^c(V)\ \ \mbox{such that}\ \ \hat{\Phi}_c(m(\tau),a)=(\Phi_c(m(\tau)),a),\ \ \forall (m(\tau),a)\in P|_{\sC}.$$  Then $\hat{\Phi}_c$ is compatible with $\widetilde{\Phi}_c$ in the sense that the following diagram commutes.
\begin{center}
$\xymatrix{
P|_{\sC} \ar[r]^{\Sigma} \ar[d]^{\hat{\Phi}_c}  & \Sp(M,\omega)|_{\sC} \ar[d]^{\widetilde{\Phi}_c} \\
\sC_{c}\times\Mp^c(V) \ar[r]^{\sigma} & \sC_c\times\Sp(V)
}$
\end{center}
In the bottom line, $\sigma$ maps $\sC_c\times\Mp^c(V)$ to $\sC_c\times\Sp(V)$ by acting on the second component.

Our goal is to construct a map $\hat{\Phi}_p:P|_{\sC}\rightarrow \sC_p\times\Mp^c(V)$ that is compatible with $\widetilde{\Phi}_p$ in the same sense.  To do so, we will find a map $\hat{F}$ such that the diagram below commutes, then set $\hat{\Phi}_p=\hat{F}\circ\hat{\Phi}_c$.
\begin{center}
$\xymatrix{
& P|_{\sC} \ar[dl]_{\hat{\Phi}_c} \ar[rd] \ar[rrrd]^{\Sigma} & \\
\sC_c\times\Mp^c(V) \ar[rr]^{\hat{F}} \ar[rrrd]^{\sigma} & & \sC_p\times\Mp^c(V) \ar[rrrd]^(.25){\sigma} & & \Sp(M,\omega)|_{\sC} \ar[ld]_(.4){\widetilde{\Phi}_c} \ar[rd]^(.4){\widetilde{\Phi}_p} & \\
& & & \sC_c\times\Sp(V) \ar[rr]^{\widetilde{F}} & & \sC_p\times\Sp(V)
}$
\end{center} 

Since $\hat{F}$ must be a lift of $\widetilde{F}$ and therefore of $F$, we assume that $\hat{F}$ takes the form $$\hat{F}(\Phi_c(m(\tau)),a)=(\Phi_p(m(\tau)),\hat{G}(\tau)a),\ \ \forall m(\tau)\in\sC,\forall a\in\Mp^c(V),$$ where $\hat{G}(\tau)\in\Mp^c(V)$.  From the condition that $$\sigma\circ\hat{F}\circ\hat{\Phi}_c=\widetilde{F}\circ\sigma\circ\hat{\Phi}_c,$$ it follows that we must have $\sigma(\hat{G}(\tau))=G(\tau)$ for all $\tau\in\R/2\pi\Z$.  That is, $\hat{G}(\tau)$ must be a lift of $G(\tau)$ to $\Mp^c(V)$.  More specifically, in order for $\hat{F}$ to be singly defined, $\hat{G}(\tau)$ must be a closed loop in $\Mp^c(V)$ with period $2\pi$.  

There is another consideration:  the effect of this change of variables on the one-form $\gamma$.  Recall that $\gamma$ is defined on $P$ by $$\gamma=\frac{1}{i\hbar}\Pi^*\beta+\frac{1}{2}\eta_*\vartheta_0,$$ where $\vartheta_0$ is the trivial connection on the product bundle $M\times\Mp^c(V)$.  When we change to symplectic polar coordinates, we would like $\gamma$ to retain the same form, where $\beta$ can be written in polar coordinates as in Equation (\ref{eq:miscpolar}), and where $\vartheta_0$ now represents the trivial connection on the product bundle $\sC_p\times\Mp^c(V)$.  Since $\ker(\eta)=\Mp(V)$, we can accomplish this by requiring that the path $\hat{G}(\tau)$ lie within $\Mp(V)$.  

Recall the parametrization of $\Mp^c(V)$ that was described in Section \ref{subsec:vtspc}.  If $\hat{G}(\tau)$ is a lift of $G(\tau)$ to $\Mp(V)$, then the parameters of $\hat{G}(\tau)$ have the form $(G(\tau),\mu(\tau))$ where $\mu(\tau)^2\Det_\C C_{G(\tau)}=1$.  To determine $\mu(\tau)$, we must examine the matrix $G(\tau)\in\Sp(V)$ explicitly.  Using Equation (\ref{eq:ptoc}), we find $$G(\tau)=\four{\lb{\frac{1}{\sqrt{2s_{0j}}}\cos\tau}_{1\leq j\leq n}}{\lb{\frac{1}{\sqrt{2s_{0j}}}\sin\tau}_{1\leq j\leq n}}{\lb{-\sqrt{2s_{0j}}\sin\tau}_{1\leq j\leq n}}{\lb{\sqrt{2s_{0j}}\cos\tau}_{1\leq j\leq n}}.$$  Now we calculate $$C_{G(\tau)}=\frac{1}{2}(G(\tau)-JG(\tau)J)$$ using the complex structure noted in Section \ref{subsec:shoinit}, then convert to an $n\times n$ complex matrix.  The result is the diagonal matrix
$$C_{G(\tau)}=\frac{1}{2}\lb{\lb{\sqrt{2s_{0j}}+\frac{1}{\sqrt{2s_{0j}}}}e^{i\tau}}_{1\leq j\leq n},$$
 which has complex determinant $$\Det_\C C_{G(\tau)}=\prod_{j=1}^n\frac{1}{2}\lb{\sqrt{2s_{0j}}+\frac{1}{\sqrt{2s_{0j}}}}e^{i\tau}=Ke^{in\tau},$$ where $K$ is a positive real value that is constant over $\sC$.  Thus, a lift of $G(\tau)$ to $\Mp(V)$ is parametrized by 
\begin{equation}\label{eq:ghat}
\hat{G}(\tau)\mapsto\lb{G(\tau),\frac{1}{\sqrt{K}}e^{-in\tau/2}}.
\end{equation}

Notice that $(G(\tau+2\pi),\mu(\tau+2\pi))=(G(\tau),\mu(\tau)e^{-in\pi})$.  When $n$ is even, we get a closed loop through $\Mp(V)$ with period $2\pi$, which was the optimal outcome.  When $n$ is odd, however, traversing $\sC$ once multiplies $\mu(\tau)$ by $-1$.  This shows that we cannot always lift $G(\tau)$ to a closed path through $\Mp(V)$.
Nevertheless, we choose the path $\hat{G}(\tau)$ as defined in Equation (\ref{eq:ghat}) because it preserves the form of $\gamma$.  If we let $\varepsilon_n\in\Mp^c(V)$ be the element whose parameters are $(I,e^{-i\pi n})$, then $\hat{G}(\tau+2\pi)=\varepsilon_n\hat{G}(\tau)$.  

To prevent $\hat{F}$ from being multi-valued, let $\dot{\sC}=\cb{m(\tau)\in\sC:\tau\in(0,2\pi)}$, and let $\dot{\sC}_c$ and $\dot{\sC}_p$ be the images of $\dot{\sC}$ in Cartesian and symplectic polar coordinates, respectively.  Then let $\hat{F}:\dot{\sC}_c\times\Mp^c(V)\rightarrow\dot{\sC}_p\times\Mp^c(V)$ be given by $$\hat{F}(\Phi_c(m(\tau)),a)=(\Phi_p(m(\tau)),\hat{G}(\tau)a),\ \ \forall (m(\tau),a)\in P|_{\dot{\sC}},$$ and identify $P|_{\dot{\sC}}$ with $\dot{\sC}_p\times\Mp^c(V)$ under the map $\hat{\Phi}_p=\hat{F}\circ\hat{\Phi}_c$.  This gives us symplectic polar coordinates for $P$ over $\dot{\sC}$, and we will manually adjust for the fact that closing the loop multiplies $\hat{G}(\tau)$ by $\varepsilon_n$.  On $\dot{\sC}_p\times\Mp^c(V)$, $\gamma$ takes the form $$\gamma=\frac{1}{i\hbar}\sum_{j=1}^ns_{0j}d\theta_j+\frac{1}{2}\eta_*\vartheta_0,$$ where $\vartheta_0$ is the trivial connection on the product bundle, and where the values $s_{0j}$ are constant over $\sC_p$.

\subsection{Quantized Energy Levels of the Harmonic Oscillator}\label{subsec:elevels}

Having established the change of coordinates, we will now drop the explicit use of $\Phi_p$, $F$ and their lifts.  Elements of $U$ will be written with respect to the symplectic polar coordinates $(s_1,\ldots,s_n,$ $\theta_1,\ldots,\theta_n)$, which for convenience we abbreviate by $X_k$, $k=1,\ldots,2n$.  Using the identifications $\hx_j\mapsto\lat{\pp{}{s_j}}{m},\hy_j\mapsto\lat{\pp{}{\theta_j}}{m}$ for all $m\in U$, $j=1,\ldots,n$, we write $$\Sp(M,\omega)|_U=U\times\Sp(V)\ \ \mbox{and}\ \ P|_{\dot{\sC}}=\dot{\sC}\times\Mp^c(V).$$ 
At the level of tangent spaces, we have $$T_{(m,I)}\Sp(M,\omega)=T_mM\times\mathfrak{sp}(V),\ \ \forall m\in U,$$ and $$T_{(m,I)}P=T_mM\times\mathfrak{mp}^c(V)=T_mM\times\lb{\mathfrak{sp}(V)\oplus\mathfrak{u}(1)},\ \ \forall m\in\dot{\sC}.$$

Recall that on $U$, $\xi_{H}=-\sum_{j=1}^n\pp{}{\theta_j}$.  Therefore $T_mS^\perp=\mbox{span}\cb{\sum_{j=1}^n\lat{\pp{}{\theta_j}}{m}}$ at each $m\in U$.  Within the model vector space $V$, let $$W^\perp=\mbox{span}\cb{\hy_1+\ldots+\hy_n}.$$  Then $$W=\mbox{span}\cb{\hx_1-\hx_2,\ldots,\hx_{n-1}-\hx_n,\hy_1,\ldots,\hy_n},$$ and $$W/W^\perp=\mbox{span}\cb{[\hx_1-\hx_2],\ldots,[\hx_{n-1}-\hx_n],[\hy_1-\hy_2],\ldots,[\hy_{n-1}-\hy_n]}.$$  The local trivialization of $\Sp(M,\omega)|_U$ induces the local trivializations $\Sp(M,\omega;S)|_{U\cap S}=U\cap S\times\Sp(V;W)$ and $\Sp(TS/TS^\perp)|_{U\cap S}=U\cap S\times\Sp(W/W^\perp)$.

By definition, $P_S$ is the bundle associated to $P^S$ by the group homomorphism $\hat{\nu}:\Mp^c(V)\rightarrow\Mp^c(W/W^\perp)$.  Over $\dot{\sC}$, we have the local trivializations $P|_{\dot{\sC}}=\dot{\sC}\times\Mp^c(V)$, $P^S|_{\dot{\sC}}=\dot{\sC}\times\Mp^c(V;W)$ and $P_S|_{\dot{\sC}}=\dot{\sC}\times\Mp^c(W/W^\perp)$, where the associated bundle map $\hat{\nu}:P^S|_{\dot{\sC}}\rightarrow P_S|_{\dot{\sC}}$ acts by $$\hat{\nu}(m,a)=(m,\hat{\nu}(a)),\ \ \forall(m,a)\in P^S|_{\dot{\sC}}.$$  Using properties of $\hat{\nu}$ noted in Section \ref{subsec:vtsubspc}, we have $\hat{\nu}(\varepsilon_n)=\varepsilon_n\in\Mp^c(W/W^\perp)$ for all $n$.  Therefore, within $P_S$, the same factor of $\varepsilon_n$ is required to close the loop from $\dot{\sC}$ to $\sC$.  Further, since $\eta_*\circ\hat{\nu}_*=\eta_*$, the one-form $\gamma_S$ induced on $P_S$ does not change form under the map $\hat{\nu}$:  $$\gamma_S|_{\dot{\sC}}=\frac{1}{i\hbar}\sum_{j=1}^ns_{0j}d\theta_j+\frac{1}{2}\eta_*\vartheta_0,$$ where $\vartheta_0$ is now the trivial connection on the product bundle $\dot{\sC}\times\Mp^c(W/W^\perp)$.  Lastly, the relevant tangent spaces are $$T_{(m,I)}\Sp(TS/TS^\perp)=T_mM\times\mathfrak{sp}(W/W^\perp),\ \forall m\in U,$$ and $$T_{(m,I)}P_S=T_mM\times(\mathfrak{sp}(W/W^\perp)\oplus\mathfrak{u}(1)),\ \ \forall m\in\dot{\sC}.$$

Recall that we chose the initial point $m_0=(s_{01},\ldots,s_{0n},0,\ldots,0)$, and that $\sC$ is the orbit of $\xi_H$ through $m_0$.  We will show that the orbit of $\txi_H$ through $(m_0,I)$ is closed in $\Sp(TS/TS^\perp)$.  We first need to lift $\phi^t$ to the flow $\tp^t$ on $\Sp(M,\omega)$.  By definition, for any $m\in U$, $\tp^t(m,I)=(\phi^t(m),\phi^t_*|_{m})$.  This implies that  $$\txi_{H}(m,I)=\left.\dd{}{t}\right|_{t=0}\tp^t(m,I)=\lb{\xi_{H}(m),\left.\dd{}{t}\right|_{t=0}\phi^t_*|_{m}}.$$

We write $\phi^t=(\phi_1^t,\ldots,\phi^t_{2n})$ with respect to the symplectic polar coordinates.  Then $\phi_*^t|_{m}$ is a $2n\times 2n$ matrix, which we interpret as an element of $\Sp(V)$, and its components are given by $(\phi_*^t)_{jk}=\lat{\pp{\phi^t_j}{X_k}}{m}$.  Noting that $\lat{\dd{}{t}}{t=0}\phi^t_j=\lb{\xi_H}_j$, we compute
\begin{equation}\label{eq:dvfld}
\lb{\lat{\dd{}{t}}{t=0}\phi_*^t|_{m}}_{jk}=\left.\dd{}{t}\right|_{t=0}\left.\pp{}{X_k}\right|_{m}\phi^t_j=\left.\pp{}{X_k}\right|_{m}(\xi_{H})_j.
\end{equation}
But $\xi_H=-\sum_{j=1}^n\pp{}{\theta_j}$ on $U$, which has constant components, so  $\left.\dd{}{t}\right|_{t=0}\phi^t_*$ is identically $0$.  Thus $$\txi_{H}(m,I)=(\xi_{H}(m),0),\ \ \forall m\in U.$$  In particular, since the $\mathfrak{sp}(V)$ component of $\txi_{H}$ is constant over the orbit $\sC$, we can find the integral curve for $\txi_H$ on $\Sp(M,\omega)$ through $(m_0,I)$ by exponentiating:  it is simply $$\tp^t(m_0,I)=(\phi^t(m_0),I).$$  This is clearly a closed orbit with period $2\pi$.  The induced flow on the bundle $\Sp(TS/TS^\perp)$ also takes the form $$\tp^t(m_0,I)=(\phi^t(m_0),I).$$  

Now we lift $\txi_{H}$ to $P_S$, horizontally with respect to $\gamma_S$.  Over the curve $\dot{\sC}$, we calculate that $\xi_{H}\contr\beta=-\sum_{j=1}^ns_{0j}=-E$.  Then, for any $m\in\dot{\sC}$, the horizontal lift of $\txi_H$ to $T_{(m,I)}P_S$ is $$\hxi_{H}(m,I)=\lb{\xi_{H}(m),0\oplus\frac{E}{i\hbar}}.$$  The $\mathfrak{mp}^c(V)$ component is constant over $\dot{\sC}$, so the integral curve is again calculated by exponentiating: $$\hp^t(m_0,I)=\lb{\phi^t(m_0),\exp\lb{0\oplus\frac{Et}{i\hbar}}}=(\phi^t(m_0),e^{Et/i\hbar}),$$ where $e^{Et/i\hbar}\in U(1)\subset\Mp^c(W/W^\perp)$.  

The quantized energy levels are those values of $E$ for which this orbit closes.  However, we must remember that one circuit about $\sC$ introduces an extra factor of $\varepsilon_n$.  Referring to the properties of the parametrization stated in Section \ref{subsec:vtspc}, we see that $\hp^0(m_0,I)=\hp^{2\pi}(m_0,I)$ if and only if $e^{2\pi E/i\hbar}=e^{-in\pi}$, or in other words, when $\frac{2\pi E}{i\hbar}+in\pi=-2\pi i N$ for some $N\in\Z$.  Upon rearranging, we find $$E=\hbar\lb{N+\frac{n}{2}},\ \ N\in\Z.$$  Thus the quantization condition correctly reproduces the expected energy levels of the harmonic oscillator.

\subsection{Example of Dynamical Invariance}\label{subsec:shodynam}

Fix $n=2$.  It is convenient to use the definitions $s_j=\frac{1}{2}(p_j^2+q_j^2)$ and $\pp{}{\theta_j}=p_j\pp{}{q_j}-q_j\pp{}{p_j}$ everywhere on $M$, for $j=1,2$.  We will point out when a statement only holds on the neighborhood $U$.

Let $k\in\R$ be a positive constant, and consider the two functions $H_1,H_2:M\rightarrow\R$ given by $$H_1=s_1+s_2-k,\ \ H_2=(s_1+s_2-k)(s_1+2s_2+1).$$
The function $H_1$ is just the energy function for the harmonic oscillator, shifted by $k$:  its quantized energy levels are $$E_N=\hbar N-k,\ \ N\in\Z.$$  Notice that $H_1^{-1}(0)=H_2^{-1}(0)$.  Let this shared level set be $S$.  The energy $E=0$ is a quantized energy for the system $(M,\omega,H_1)$ if and only if $\frac{k}{\hbar}\in\Z$.

The two Hamiltonian vector fields are
\begin{align*}
\xi_{H_1}&=-\pp{}{\theta_1}-\pp{}{\theta_2},\\ 
\xi_{H_2}&=(s_1+2s_2+1)\xi_{H_1}-H_1\lb{\pp{}{\theta_1}+2\pp{}{\theta_2}}\\
&=-(2s_1+3s_2-k+1)\pp{}{\theta_1}-(3s_1+4s_2-2k+1)\pp{}{\theta_2}.
\end{align*}
Since $H_1=0$ on $S$, we see that $\xi_{H_2}=(s_1+2s_2+1)\xi_{H_1}$ everywhere on $S$.  Thus the vector fields are parallel on $S$, as expected, and so they share the same orbits in $S$.  However, $\xi_{H_1}$ and $\xi_{H_2}$ are not parallel away from $S$.  Let $\phi^t$ be the flow of $\xi_{H_1}$, and let $\rho^t$ be the flow of $\xi_{H_2}$. 

Consider the initial point $m_0\in S\cap U$, where $m_0=(s_{01},s_{02},0,0)$ with $s_{01}+s_{02}=k$ and $s_{01},s_{02}\neq 0$.  The orbit of both $\phi^t$ and $\rho^t$ through $m_0$ is $$\sC=\cb{(s_{01},s_{02},\tau,\tau):\tau\in\R/2\pi\Z}.$$  From Section \ref{subsec:elevels}, we know that $$\txi_{H_1}(m,I)=(\xi_{H_1}(m),0),\ \ \forall m\in\sC,$$ and therefore $$\tp^t(m_0,I)=(\phi^t(m_0),I).$$  By the same calculation,  $\txi_{H_2}(m,I)=(\xi_{H_2}(m),\left.\dd{}{t}\right|_{t=0}\rho^t_*|_m)$ for $m\in\sC$.  Applying Equation (\ref{eq:dvfld}) to the components of $\xi_{H_2}$ yields  $$\left.\dd{}{t}\right|_{t=0}\rho^t_*|_{m}=\lb{\begin{array}{cccc}0&0&0&0\\0&0&0&0\\-2&-3&0&0\\-3&-4&0&0\end{array}},$$ which we interpret as an element of $\mathfrak{sp}(V)$.  Let this matrix be denoted by $\kappa$.  Then $$\txi_{H_2}(m,I)=(\xi_{H_2}(m),\kappa),$$ and since the Lie algebra component is constant over $\sC$, we obtain the integral curve on $\Sp(M,\omega)$ through $(m_0,I)$ by exponentiating: $$\tr^t(m_0,I)=(\rho^t(m_0),\exp(t\kappa)).$$  A calculation establishes that $$\exp(t\kappa)=\lb{\begin{array}{cccc}1&0&0&0\\0&1&0&0\\-2t&-3t&1&0\\-3t&-4t&0&1\end{array}},$$ which is clearly not periodic.  Thus $\tr^t$ has no closed orbits on $\Sp(M,\omega)$ over $S\cap U$.  

Now let us transfer to $\Sp(TS/TS^\perp)$.  If we apply the definitions and identifications laid out at the beginning of Section \ref{subsec:elevels}, now setting $n=2$, then we find $W^\perp=\mbox{span}\cb{\hy_1+\hy_2}$, $W/W^\perp=\mbox{span}\cb{\ls{\hx_1-\hx_2},\ls{\hy_1-\hy_2}}$, and the identification of $\Sp(M,\omega)|_{\sC}$ with $\sC\times\Sp(V)$ induces an identification of $\Sp(TS/TS^\perp)|_{\sC}$ with $\sC\times\Sp(W/W^\perp)$.  Notice that
\begin{align*}
\exp(t\kappa)(\hx_1-\hx_2)&=\hx_1-\hx_2+t(\hy_1+\hy_2),\\
\exp(t\kappa)(\hy_1-\hy_2)&=\hy_1-\hy_2.
\end{align*}
Therefore the path through $\Sp(W/W^\perp)$ induced by $\exp(t\kappa)$ is $$\nu(\exp(t\kappa))=\lb{\begin{array}{cc}1&0\\0&1\end{array}}.$$  Thus, on $\Sp(TS/TS^\perp)|_{\sC}$, $$\tr^t(m_0,I)=\lb{\rho^t(m_0),I},$$ which coincides with $\tp^t(m_0,I)$.

The above calculation omits certain cases:  namely, if the starting point $m_0$ has $s_{01}=0$ or $s_{02}=0$.  We can no longer eliminate such cases by performing a rotation, because $H_2$ is not symmetric with respect to $s_1$ and $s_2$.  If $m_0\notin U$, then we have to modify our approach.  For example, if $s_{01}=0$, then we retain Cartesian coordinates for the $p_1q_1$-plane and convert to symplectic polar on the $p_2q_2$-plane.  The calculation is more complicated, but the result is similar:  over $\sC$, we find that $\txi_{H_2}=(\xi_{H_2},\kappa)$ for some constant value of $\kappa\in\mathfrak{sp}(V)$.  The path $\tr^t(m_0,I)=(\rho^t(m_0),\exp(t\kappa))$ does not close in $\Sp(M,\omega)$, but the induced path in $\Sp(TS/TS^\perp)$ coincides with $\tp^t(m_0,I)$.  The same pattern holds if we take $s_{02}=0$.

Thus, if the quantized energy condition were stated in terms of the holonomy of $\gamma^S$ over closed orbits in $\Sp(M,\omega;S)$, as it was in \cite{rob1}, then the value $E=0$ would satisfy the condition vacuously for the system $(M,\omega,H_2)$, regardless of the value of $k$.  It is only by descending to $\Sp(TS/TS^\perp)$ that we recover the quantization condition $\frac{k}{\hbar}\in\Z$.  Hence our definition of a quantized energy level is dynamically invariant, while that in \cite{rob1} is not.

\section{Comparison with Kostant-Souriau Quantization}\label{sec:ks}

\subsection{The quantized energy condition and its properties}

The quantized energy condition in Definition \ref{def:quantE} can be easily adapted to Kostant-Souriau prequantization.  Indeed, the Kostant-Souriau case is simpler, since it does not involve the symplectic frame bundle.  First, recall the definition of a prequantization circle bundle for a symplectic manifold $(M,\omega)$.

\begin{definition}
A \textbf{prequantization circle bundle} for $(M,\omega)$ is a principal circle bundle $Y\maps{\Pi}M$, together with a $\mathfrak{u}(1)$-valued connection one-form $\gamma$ on $Y$ such that $d\gamma=\frac{1}{i\hbar}\Pi^*\omega$.
\end{definition}

Assume that $(M,\omega)$ admits a prequantization circle bundle $(Y,\gamma)$, and let $H:M\rightarrow\R$ be a smooth function.  We now mimic the constructions in Sections \ref{subsec:hamvt} and \ref{subsec:mpcred}, using $Y$ in place of $P$.  Denote the Hamiltonian vector field corresponding to $H$ by $\xi_H$ as before, and let $\txi_H$ be the lift of $\xi_H$ to $Y$ that is horizontal with respect to $\gamma$.  Fix $E$, a regular value of $H$, and let $S=H^{-1}(E)\subset M$.  Let $Y^S$ be the restriction of $Y$ to $S$, and let $\gamma^S$ be the pullback of $\gamma$ to $Y^S$.  
$$\xymatrix{(Y,\gamma) \ar[d] & \ar[l]_{\mbox{\footnotesize incl.}} (Y^S,\gamma^S) \ar[d] \\
(M,\omega) & \ar[l]_{\mbox{\footnotesize incl.}} S
}$$
Since there is no group homomorphism with which to construct an associated bundle, the construction stops here and we give the definition of a quantized energy level using $(Y^S,\gamma^S)$.


\begin{definition}
If the connection one-form $\gamma^S$ has trivial holonomy over all closed orbits of the Hamiltonian vector field $\xi_H$ on $S$, then $E$ is a \textbf{Kostant-Souriau (KS) quantized energy level} for the system $(M,\omega,H)$.
\end{definition}

It is straightforward to establish that this definition has a dynamical invariance property.  Suppose $H_1,H_2:M\rightarrow\R$ are smooth functions such that $H_1^{-1}(E_1)=H_2^{-1}(E_2)=S$ for regular values $E_1$ and $E_2$.  We argued in Section \ref{subsec:stage2} that $\xi_{H_1}$ and $\xi_{H_2}$ are parallel on $S$, which implies that they have the same orbits.  Therefore $\gamma^S$ has trivial holonomy over the orbits of one if and only if it has trivial holonomy over the orbits of the other.  The KS version of the dynamical invariance theorem is immediate.

\begin{theorem}
If $H_1,H_2:M\rightarrow\R$ are smooth functions such that $H_1^{-1}(E_1)=H_2^{-1}(E_2)$ for regular values $E_j$ of $H_j$, $j=1,2$, then $E_1$ is a KS quantized energy level for $(M,\omega,H_1)$ if and only if $E_2$ is a KS quantized energy level for $(M,\omega,H_2)$.
\end{theorem}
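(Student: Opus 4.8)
The plan is to prove this directly from the definition of a KS quantized energy level, observing that the entire construction on the Kostant-Souriau side only involves the level set $S$ and the Hamiltonian vector field's orbits on $S$, with none of the intermediate frame-bundle machinery that complicated the metaplectic-c argument. First I would note that the hypothesis $H_1^{-1}(E_1)=H_2^{-1}(E_2)$ means both functions share a single regular level set $S$, and that the restricted bundle $(Y^S,\gamma^S)$ depends only on $S$ and not on which function cut it out; in particular, exactly the same pair $(Y^S,\gamma^S)$ appears in the definition of a KS quantized energy level for $(M,\omega,H_1)$ and for $(M,\omega,H_2)$.

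The key geometric input is the same one used in Section \ref{subsec:stage2}: for each $s\in S$, the null direction $T_sS^\perp$ equals $\mathrm{span}\{\xi_{H_1}(s)\}=\mathrm{span}\{\xi_{H_2}(s)\}$, so there is a nowhere-vanishing function $c:S\rightarrow\R$ with $\xi_{H_2}=c\,\xi_{H_1}$ on $S$. A nowhere-vanishing rescaling of a vector field on $S$ changes the parametrization of its integral curves but not their images, so $\xi_{H_1}$ and $\xi_{H_2}$ have exactly the same orbits in $S$; in particular a subset of $S$ is a closed orbit of $\xi_{H_1}$ if and only if it is a closed orbit of $\xi_{H_2}$. (Strictly, one might want $c$ to have constant sign on each orbit, which is automatic since $c$ is continuous and nonvanishing along a connected orbit, so the orbit's image is traced out in either case.)

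Finally, I would combine these two observations. Holonomy of the connection one-form $\gamma^S$ around a closed curve in $Y^S$ lying over a loop in $S$ depends only on the loop (its image), not on how it is parametrized. Since $\xi_{H_1}$ and $\xi_{H_2}$ trace out the same closed loops in $S$, the statement ``$\gamma^S$ has trivial holonomy over all closed orbits of $\xi_{H_1}$'' is logically equivalent to ``$\gamma^S$ has trivial holonomy over all closed orbits of $\xi_{H_2}$.'' By the definition of a KS quantized energy level, the former says $E_1$ is a KS quantized energy level for $(M,\omega,H_1)$ and the latter says $E_2$ is one for $(M,\omega,H_2)$, so the two conditions are equivalent, which is the claim.

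I do not expect a genuine obstacle here: the Kostant-Souriau setting is deliberately simpler than the metaplectic-c one because there is no associated-bundle step, so no analogue of Lemmas \ref{lem:intuit} and \ref{lem:cvalue2} is needed, and the non-constancy of $c$ is harmless since only the orbits as sets matter. The one point that deserves an explicit sentence is reparametrization-invariance of holonomy, which is standard but should be stated so the logic is airtight.
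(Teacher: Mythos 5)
Your argument matches the paper's: both invoke the observation from Section \ref{subsec:stage2} that $\xi_{H_1}$ and $\xi_{H_2}$ are parallel (nowhere-vanishing multiples) on $S$, hence have the same orbits, and then conclude via reparametrization-invariance of holonomy that the two trivial-holonomy conditions on $\gamma^S$ coincide. You simply make explicit two points the paper leaves implicit (that $(Y^S,\gamma^S)$ depends only on $S$, and that holonomy is parametrization-independent), which is a reasonable filling-in of the same argument.
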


In Section \ref{subsec:mpcred}, we examined the case in which the symplectic reduction of $(M,\omega)$ at $E$ is a manifold.  Theorem \ref{thm:rob}, due to Robinson \cite{rob1}, gives the conditions under which the quantization condition is sufficient to imply that the symplectic reduction admits a metaplectic-c quantization.  A similar theorem can be given in the context of prequantization circle bundles.

First, we state a general result, which was used in \cite{rob1} to prove Theorem \ref{thm:rob}.  Let $S$ be an arbitrary manifold, and suppose that $(Z,\delta)$ is a principal circle bundle with connection one-form over $S$.  Let the curvature of $\delta$ be $\varpi$.  Suppose that $F$ is a foliation of $S$ whose leaf space $S_F$ is a smooth manifold.  Denote the leaf projection map by $S\maps{\pi}S_F$.  If $\delta$ has trivial holonomy over all of the leaves of $F$, then $Z$ can be factored to produce a well-defined circle bundle $Z_F\rightarrow S_F$.  Further, $\delta$ descends to a connection one-form $\delta_F$ on $Z_F$, and the curvature $\varpi_F$ of $\delta_F$ satisfies $\pi^*\varpi_F=\varpi$.

Now apply this result to the circle bundle $(Y^S,\gamma^S)\rightarrow S$, where the foliation is given by the orbits of the vector field $\xi_H$ on the level set $S$, and the symplectic reduction $(M_E,\omega_E)$ is its leaf space.  

\begin{theorem}
Suppose that the symplectic reduction $(M_E,\omega_E)$ for $(M,\omega)$ at $E$ is a manifold.  If $\gamma^S$ has trivial holonomy over all closed orbits of $\xi_H$ on $S$, then the quotient of $(Y^S,\gamma^S)$ by the orbits of $\txi_H$ is a prequantization circle bundle for $(M_E,\omega_E)$.
\end{theorem}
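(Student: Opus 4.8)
The plan is to reduce everything to the general circle-bundle factoring result quoted just before the statement, applied to $(Y^S,\gamma^S)\to S$ with the foliation $F$ taken to be the foliation of $S$ by the orbits of $\xi_H$, whose leaf space is $S_F=M_E$. First I would check that the hypothesis of the theorem is exactly what is needed to invoke that result. Since $\xi_H$ is tangent to $S$, its horizontal lift $\txi_H$ is tangent to $Y^S$, so the proposed quotient makes sense. Moreover every leaf of $F$ is either a closed orbit of $\xi_H$ or an orbit diffeomorphic to $\R$, and in the second case the leaf is simply connected, so $\gamma^S$ has trivial holonomy over it automatically. Hence ``$\gamma^S$ has trivial holonomy over all closed orbits of $\xi_H$ on $S$'' is equivalent to ``$\gamma^S$ has trivial holonomy over all leaves of $F$,'' and the general result applies.

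The general result then yields a circle bundle $Y_F\to M_E$, a connection one-form $\gamma_F$ on $Y_F$ to which $\gamma^S$ descends, and the curvature identity $\pi^*\varpi_F=\varpi^S$, where $\varpi_F$ and $\varpi^S$ denote the curvatures of $\gamma_F$ and $\gamma^S$ and $\pi:S\to M_E$ is the leaf projection. Next I would identify $Y_F$ with the quotient of $Y^S$ by the orbits of $\txi_H$. Because $\txi_H$ is the horizontal lift of $\xi_H$, its orbits are precisely the horizontal lifts of the leaves of $F$: an orbit lying over a closed orbit of $\xi_H$ closes up exactly because the holonomy there is trivial, while an orbit over a non-closed leaf is a line in either case. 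Thus the orbit space of $\txi_H$ on $Y^S$ coincides with the orbit space of the lifted foliation, which is $Y_F$, and the descended connection one-form is $\gamma_F$.

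It remains to verify the prequantization condition $d\gamma_F=\frac{1}{i\hbar}\Pi_F^*\omega_E$, where $\Pi_F:Y_F\to M_E$. Writing $\iota_S:S\hookrightarrow M$ for the inclusion, the facts that $\gamma^S$ is the pullback of $\gamma$ and $d\gamma=\frac{1}{i\hbar}\Pi^*\omega$ give $\varpi^S=\frac{1}{i\hbar}\iota_S^*\omega$. Combining this with $\pi^*\varpi_F=\varpi^S$ and with the defining property $\pi^*\omega_E=\iota_S^*\omega$ of the reduced symplectic form yields $\pi^*\varpi_F=\pi^*\bigl(\frac{1}{i\hbar}\omega_E\bigr)$; since $\pi$ is a surjective submersion, $\pi^*$ is injective on forms, so $\varpi_F=\frac{1}{i\hbar}\omega_E$. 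Unwinding the definition of curvature gives $d\gamma_F=\frac{1}{i\hbar}\Pi_F^*\omega_E$, so $(Y_F,\gamma_F)$ --- which is the quotient of $(Y^S,\gamma^S)$ by the orbits of $\txi_H$ --- is a prequantization circle bundle for $(M_E,\omega_E)$.

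I expect the step requiring the most care to be the identification in the second paragraph: checking that passing to the $\txi_H$-orbit space of $Y^S$ really does produce the same object as the factoring construction in the cited general result, and in particular that the trivial-holonomy hypothesis is exactly what prevents the orbit space from being a non-Hausdorff quotient with collapsed fibers. The rest is bookkeeping, once one uses that the curvature of a pullback connection is the pullback of the curvature and that $\pi^*$ is injective on differential forms.
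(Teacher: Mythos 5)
Your proof is correct and follows the same route as the paper, which obtains the theorem by applying the quoted circle-bundle factoring result to $(Y^S,\gamma^S)\rightarrow S$ with the foliation by orbits of $\xi_H$, whose leaf space is $(M_E,\omega_E)$. The paper leaves the verification implicit; you fill in the details the paper elides, namely the equivalence of trivial holonomy over closed orbits with trivial holonomy over all leaves (using that non-closed orbits are simply connected and the connection is flat over every leaf), the identification of the factored bundle with the $\txi_H$-orbit space, and the curvature computation giving $\varpi_F=\frac{1}{i\hbar}\omega_E$ from $\pi^*\omega_E=\iota_S^*\omega$ and injectivity of $\pi^*$.
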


Thus the Kostant-Souriau version of the quantized energy condition is sufficient to ensure that the symplectic reduction admits a prequantization circle bundle, whenever the symplectic reduction is a manifold.  This is a slight improvement over the metaplectic-c result, since it does not depend on a quotient of the symplectic frame bundle being well defined.

\subsection{Lack of half-shift in the harmonic oscillator}

In this section, we will determine the quantized energy levels of the $n$-dimensional harmonic oscillator, using a prequantization circle bundle and the KS version of the quantization condition.  The calculation will be significantly simpler than that in Section \ref{sec:sho}, but it will yield the wrong answer.

Let $M=\R^{2n}$, with Cartesian coordinates $(p_1,\ldots,p_n,q_1,\ldots,q_n)$ and symplectic form $\omega=\sum_{j=1}^ndp_j\wedge dq_j$.  The energy function and corresponding Hamiltonian vector field for the harmonic oscillator are $$H=\frac{1}{2}\sum_{j=1}^n(p_j^2+q_j^2),\ \ \xi_H=\sum_{j=1}^n\lb{q_j\pp{}{p_j}-p_j\pp{}{q_j}}.$$  Let the flow of $\xi_H$ on $M$ be $\phi^t$.  We know from Section \ref{subsec:shoinit} that all of the orbits of $\xi_H$ are circles, and that  $\phi^{t+2\pi}(m)=\phi^t(m)$ for all $m\in M$.

Let $Y=M\times U(1)$, with projection map $Y\maps{\Pi}M$.  We define  $$\beta=\frac{1}{2}\sum_{j=1}^n(p_jdq_j-q_jdp_j)$$ on $M$, and let $\gamma=\frac{1}{i\hbar}\Pi^*\beta+\vartheta_0$ on $Y$, where $\vartheta_0$ is the trivial connection on the product bundle $M\times U(1)$.  Then $(Y,\gamma)$ is a prequantization circle bundle for $(M,\omega)$, and it is unique up to isomorphism.

Let $E>0$ be arbitrary, and let $S=H^{-1}(E)$.  At any point $s\in S$, we calculate that $\xi_H\contr\beta=-E$.  Therefore the lifted vector field $\txi_H$ at the point $(s,I)\in Y$ is $$\txi_H(s,I)=\lb{\xi_H(s),\frac{E}{i\hbar}},$$ where we identify the tangent space $T_{(s,I)}Y$ with $T_sM\times\mathfrak{u}(1)$.  Note that the $\mathfrak{u}(1)$ component is constant over all of $S$, so in particular it is constant over an orbit.  Let the flow of $\txi_H$ be $\tp^t$.  By exponentiation, we find that $$\tp^t(s,I)=\lb{\phi^t(s),e^{Et/i\hbar}}.$$  From this, it follows that the holonomy of $\gamma^S$ over the orbit is trivial if and only if $E=N\hbar$ for some $N\in\Z$.  Thus the KS quantized energy levels are inconsistent with the predictions of quantum mechanics when the dimension $n$ is odd.

This shortcoming in the Kostant-Souriau prequantization of the harmonic oscillator is well known, and the standard solution is to proceed from prequantization to quantization while introducing the half-form correction.  We briefly sketch the process; see, for example, \cite{gs1,sw1,sn1} for more details.  

Suppose a symplectic manifold $(M,\omega)$ admits a prequantization circle bundle $(Y,\gamma)$.  Let $(L,\nabla)$ be the complex line bundle with connection associated to $(Y,\gamma)$.  To quantize $(M,\omega)$, we require two more objects:  a metaplectic structure for $(M,\omega)$, and a choice of polarization $F\subset TM^\C$.  The metaplectic structure induces a metalinear structure on the polarization, from which we can construct $\wedge^{1/2}F\rightarrow M$, the complex line bundle of half-forms.  The quantization of $(M,\omega)$ is a representation of a certain subalgebra of $C^\infty(M)$ as operators on those sections of $L\otimes\wedge^{1/2}F$ that are flat along the leaves of $F$.  

In this context, the quantized energy levels for the system $(M,\omega,H)$ can be taken to be the eigenvalues of the operator corresponding to the energy function $H$.  When the quantization recipe is applied to the harmonic oscillator, the presence of the half-form bundle adds the $\frac{n}{2}$ shift to the energy eigenvalues.  However, this correction comes at the cost of introducing a choice of polarization, and the quantized energy definition can no longer be evaluated over a single level set of $H$.

By comparison, when we use the metaplectic-c formulation of a quantized energy level, we find that the correct harmonic oscillator energies are encoded in the geometry of the level sets of the energy function.  The result is dynamically invariant, independent of polarization, and consistent with physical prediction.  This example illustrates the benefits of metaplectic-c quantization and our quantized energy definition.

\subsection*{Acknowledgements}

The author thanks her advisor, Yael Karshon, for many helpful discussions.  This work was supported in part by an NSERC scholarship.

\end{document}